\theoremstyle{plain}
\newtheorem{theorem}{Theorem}
\newtheorem{corollary}[theorem]{Corollary}
\newtheorem{lemma}[theorem]{Lemma}
\theoremstyle{definition}
\newtheorem*{remark*}{Remark}
\begin{document}
\title[Exit times for integrated random walks]{Exit times for integrated random walks}
\author[Denisov]{Denis Denisov}
\address{School of Mathematics, Cardiff University, Senghennydd Road
CARDIFF, Wales, UK.
CF24 4AG Cardiff }
\email{dennissov@gmail.com}

\author[Wachtel]{Vitali Wachtel}
\address{Mathematical Institute, University of Munich, Theresienstrasse 39, D--80333
Munich, Germany}
\email{wachtel@mathematik.uni-muenchen.de}

\begin{abstract}
We consider a centered random walk with finite variance and investigate the asymptotic behaviour
of the probability that the area under this walk remains positive up to a large time $n$. 
Assuming that the moment of order $2+\delta$ is finite, we show that the exact asymptotics for this probability are 
$n^{-1/4}$. To show these asymptotics we develop a discrete potential theory for the integrated random walk. 
\end{abstract}


\keywords{Markov chain, exit time, harmonic function, normal approximation, Kolmogorov diffusion}
\subjclass{Primary 60G50; Secondary 60G40, 60F17}
\thanks{Supported by the DFG}
\maketitle
\section{Introduction, main results and discussion}
\subsection{Background and motivation}
Let $X,X_1,X_2,\ldots$ be independent identically distributed random variables with $\mathbf{E}[X]=0$.
For every starting point $(x,y)$ define
$$
S_n=y+X_1+X_2+\ldots+X_n, \quad n\geq0.
$$ 
and
$$
S_n^{(2)}=x+S_1+S_2+\ldots+S_n=x+ny+nX_1+(n-1)X_2+\ldots+X_n.
$$
Sinai \cite{Sin92} initiated the study of the asymptotics of the probability of the event
$$
A_n:=\left\{S_k^{(2)}>0\text{ for all }k\leq n\Big|S_0=S_0^{(2)}=0\right\}.
$$
Assuming that $S_n$ is a simple symmetric random walk he showed that 
\begin{equation}\label{sinai}
C_1n^{-1/4}\leq\mathbf{P}(A_n)\leq C_2n^{-1/4}.
\end{equation}
The same bounds were obtained for some other special cases in \cite{Vys08}. 

Aurzada and Dereich \cite{AD11} have shown that if $\mathbf{E}e^{\beta |X|}<\infty$ for some
positive $\beta$ then
\begin{equation}
\label{AD}
C_*n^{-1/4}\log^{-\gamma}n\leq\mathbf{P}(A_n)\leq
C^*n^{-1/4}\log^{\gamma}n
\end{equation}
with some positive constants $C_*$, $C^*$ and some finite $\gamma$.  Bounds (\ref{AD}) are just a 
special case of the results in \cite{AD11} for $q$-times integrated random walks and Levy
processes.
Dembo, Ding and Gao \cite{DG11} have recently shown that (\ref{sinai}) is valid for all random walks
with finite second moment. 

Exact asymptotics for $\mathbf{P}(A_n)$ are known only in some special cases.
Vysotsky \cite{Vys11} have shown that if, in addition to the second moment assumption,
$S_n$ is either right-continuous 
or right-exponential  then
\begin{equation}\label{vlad}
\mathbf{P}(A_n)\sim Cn^{-1/4}.
\end{equation}
(Here and throughout $a_n\sim b_n$ means that $\frac{a_n}{b_n}\to1$ as $n\to\infty$.)

It is natural to expect that (\ref{vlad}) holds for all driftless random walks with finite variance.

If one replaces the second moment condition by the assumption that $X$ belongs to the normal domain of
attraction of the spectrally positive $\alpha$-stable law with some $\alpha\in(1,2]$, then (\ref{sinai}) 
and (\ref{vlad}) remain valid with $n^{-(\alpha-1)/2\alpha}$ instead of $n^{-1/4}$, see \cite{DG11} and \cite{Vys11}.

The methods used in the above mentioned papers are quite different. It is not clear what is the most natural tool 
for this problem. Here we propose another approach to this problem.  
More precisely, we develop a potential theory for integrated random walks, which allows one
to determine the exact asymptotic behaviour of $\mathbf{P}(A_n)$. It can be seen as a continuation of our studies of 
exit times of multi-dimensional random walks, see \cite{DW10,DW11}.

It is clear that the sequence $\{S_n^{(2)}\}_{n\geq1}$ is non-markovian. 
This fact complicates the analysis  of the integrated random walk. However, it
is possible to obtain the markovian property by increasing the dimension of the process.
More precisely, we consider the process
$$
Z_n:=(S_n^{(2)},S_n).
$$
Then, the first time when $S_n^{(2)}$ is not positive coincides with the exit time of $Z_n$ from a half-space
$$
\tau:=\min\{k\geq1:Z_k\notin\mathbb{R}_+\times\mathbb{R}\}.
$$

In our recent paper \cite{DW11} we suggested a method of studying random walks conditioned to stay in a cone. 
Similarly in the case of the integrated random walks we have a (quite simple) cone $\mathbb{R}_+\times\mathbb{R}$,
but the process $Z_n$ is 'really' Markov, i.e. the increments are not independent. We show that the method from 
\cite{DW11} can be adapted to the case of Markov chain $Z_n$, and this adaptation allows one to find asymptotics
of $\mathbf{P}_z(\tau>n)$ for every starting point $z=(x,y)$.
\subsection{Main result}
Our approach essentially relies on a strong normal approximation and corresponding results 
for the integrated Brownian motion. Hence we will start with results and notation for the integrated Brownian motion. 
This process is also known as the Kolmogorov diffusion. 

Let $B_t$ be a standard Brownian motion and consider a two-dimensional process $(\int_0^tB_sds,B_t)$. Since this
process is gaussian, one can  obtain by computing correlations that the transition density of $(\int_0^tB_sds,B_t)$ is given by
$$
g_t(x,y;u,v)=\frac{\sqrt{3}}{\pi t^2}\exp\left\{-\frac{6(u-x-ty)^2}{t^3}+\frac{6(u-x-ty)(v-y)}{t^2}
-\frac{2(v-y)^2}{t}\right\}.
$$

Let
$$
\tau^{bm}:=\min\left\{t>0:x+yt+\int_0^t B_sds\leq0\right\}.
$$
The behaviour of the killed at leaving $\mathbb{R}_+\times\mathbb{R}$ version of 
$\left(\int_0^t B_s ds,B_t\right)$ was by many authors. Here we will follow a paper by Groeneboom, Jongbloed and
Wellner \cite{GJW99}, where  one can also find a  history of the subject  and corresponding references. 
In particular they found the harmonic function for this 
process, which is given by the following relations:
\begin{equation}\label{h-def}
h(x,y)=
\begin{cases}
\left(\frac{2}{9}\right)^{1/6}\frac{y}{x^{1/6}}U\left(\frac{1}{6},\frac{4}{3},\frac{2y^3}{9x}\right),\quad y\ge 0 \\
-\left(\frac{2}{9}\right)^{1/6}\frac{1}{6}\frac{y}{x^{1/6}}e^{2y^3/9x}U\left(\frac{7}{6},\frac{4}{3},-\frac{2y^3}{9x}\right),\quad y<0,
\end{cases}
\end{equation}
where $U$ is the confluent hypergeometric function. Function $h(x,y)$ is harmonic in the sense that $\mathcal{D}h=0$, 
where $\mathcal{D}=y\frac{\partial}{\partial x}+\frac{1}{2}\frac{\partial^2}{\partial y^2}$ is the generator of
$\left(\int_0^t B_s ds,B_t\right)$.
Using the explicit density of $\mathbf P_{(0,1)}(\tau^{bm}>t)$ found in \cite{McK63},
they derived asymptotics for 
\begin{equation}
\label{asym.bm}
 \mathbf P_{(x,y)}(\tau^{bm}>t)\sim \varkappa\frac{h(x,y)}{t^{1/4}},\quad t\to \infty,
\end{equation}
where $\varkappa=\frac{3\Gamma(1/4)}{2^{3/4}\pi^{3/2}}.$

The harmonic function $h$ defined in (\ref{h-def}) helps us to construct the corresponding 
harmonic function for the killed integrated random walk. As function $h$ is 
defined only for $z\in\mathbb{R}_+\times\mathbb{R}$,  we extend it to 
$\mathbb{R}^2$ by putting $h=0$ outside $\mathbb{R}_+\times\mathbb{R}$. Function $h$ is harmonic for the killed integrated Brownian motion but not  
for the killed integrated random walk. To overcome this difficulty we introduce a corrector function  for $z=(x,y)\in\mathbb{R}^2$,
\begin{equation}
  \label{eq:defn.f}
  f(x,y)=\mathbf E_{z}h(Z(1))-h(z). 
\end{equation}
This function is well defined since we have extended $h$ to the whole plane. Now we are in position 
to define the harmonic function for the killed integrated random walk. For $z\in \mathbb{R}_+\times\mathbb{R}$ 
let  
\begin{equation}
  \label{eq:V}
  V(z)=h(z)+\mathbf E_z\sum_{l=1}^{\tau-1}f(Z_k).
\end{equation}
This function is harmonic for the killed integrated random walk in the sense that 
\begin{equation}
\label{T1.1}
\mathbf{E}_z\left[V(Z_1),\tau>1\right]=V(z)\text{ for all }z\in \mathbb{R}_+\times\mathbb{R}.
\end{equation}
It is not at all clear that function $V$ in (\ref{eq:V}) 
is well-defined and positive. In fact this is the most difficult part of the proof and it is done 
in Section~\ref{sect.V.is.good}. 

Our main result is the following theorem.
\begin{theorem}
\label{main}
Assume that $\mathbf{E}X=0$, $\mathbf{E}[X^2]=1$ and $\mathbf{E}|X|^{2+\delta}<\infty$ for some $\delta>0.$
Then the function $V$ from (\ref{eq:V}) is well-defined 
and  strictly positive on
$$
K_+:=\left\{z:\,\mathbf{P}_z(Z_n\in\mathbb{R}_+\times\mathbb{R}_+,\tau>n)>0\text{ for some }n\geq0\right\}.
$$
Moreover,
\begin{equation}
\label{T1.2}
\mathbf{P}_z(\tau>n)\sim \varkappa\frac{V(z)}{n^{1/4}}\quad\text{ as }n\to\infty
\end{equation}
and 
\begin{equation}
\label{T1.3}
\mathbf{P}_z\left(\left(\frac{S_n^{(2)}}{n^{3/2}},\frac{S_n}{n^{1/2}}\right)\in\cdot\bigg|\tau>n\right)
\to \mu\quad\text{weakly},
\end{equation}
where $\mu$ has density 
$$
Ch(x,y)g_1(0,0;x,y),\quad (x,y)\in\mathbb{R}_+\times\mathbb{R}.
$$
\end{theorem}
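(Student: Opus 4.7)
The plan is to adapt the potential-theoretic approach of \cite{DW11} from random walks in cones to the (genuinely Markov) chain $Z_n=(S_n^{(2)},S_n)$, using a strong Gaussian approximation to transfer the diffusion asymptotics (\ref{asym.bm}) to the discrete process. The argument splits into four steps; the first, establishing that the corrector series defining $V$ converges, is the main obstacle, as the authors themselves emphasise.

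\emph{Step 1: Well-definedness of $V$.} From (\ref{h-def}) and standard asymptotics of the confluent hypergeometric function $U$, I would first extract pointwise bounds for the partial derivatives, roughly $|\partial^{\alpha}h(z)|\le C(1+|z|)^{1/2-|\alpha|}$ for $z$ in the interior of $\mathbb{R}_+\times\mathbb{R}$. Since $\mathcal{D}h=0$ there, a third-order Taylor expansion of $\mathbf{E}_zh(Z_1)$ about $z$, using $Z_1-z=(y+X_1,X_1)$ and the hypothesis $\mathbf{E}|X|^{2+\delta}<\infty$, yields
\[
|f(z)|\le C(1+|z|)^{-(1+\delta)/2}
\]
away from the boundary, with a cruder $O(1)$ bound in a neighbourhood of it. Combining this with an a priori estimate $\mathbf{P}_z(\tau>k)\le Ck^{-1/4}$ from \cite{DG11} and moment estimates for $Z_k$ on $\{\tau>k\}$, one obtains $\mathbf{E}_z\sum_{k=1}^{\tau-1}|f(Z_k)|<\infty$, so $V$ is well-defined.

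\emph{Step 2: Harmonicity and positivity.} The relation (\ref{T1.1}) follows from (\ref{eq:defn.f}) and (\ref{eq:V}) by a telescoping argument, justified by the absolute convergence of Step~1 together with the fact that $h$ vanishes on the complement of $\mathbb{R}_+\times\mathbb{R}$. For strict positivity on $K_+$, one first shows $V(z)\ge h(z)-C$ in the interior, so $V>0$ on a "good" set $G$; the definition of $K_+$ then guarantees that starting from any $z\in K_+$ the chain reaches $G$ before $\tau$ in finitely many steps $n_0$ with positive probability, giving $V(z)=\mathbf{E}_z[V(Z_{n_0});\tau>n_0]>0$.

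\emph{Step 3: Tail asymptotics (\ref{T1.2}).} Fix $m=m_n\to\infty$ with $m_n=o(n)$ and decompose via the Markov property,
\[
\mathbf{P}_z(\tau>n)=\mathbf{E}_z\bigl[\mathbf{P}_{Z_m}(\tau>n-m);\,\tau>m\bigr].
\]
A KMT-type strong coupling of $(S_n)$ with Brownian motion (permitted by $\mathbf{E}|X|^{2+\delta}<\infty$) transfers (\ref{asym.bm}) uniformly over starting points $z'=(x',y')$ at the diffusive scale $|x'|\asymp m^{3/2}$, $|y'|\asymp m^{1/2}$, giving
\[
\mathbf{P}_{z'}(\tau>n-m)=\frac{\varkappa\,h(z')}{n^{1/4}}\bigl(1+o(1)\bigr).
\]
Atypical starting points $z'$ (close to the boundary or of excessive norm) are controlled by the a priori upper bound and moment estimates for $Z_m$, so that
\[
n^{1/4}\mathbf{P}_z(\tau>n)\longrightarrow\varkappa\lim_{m\to\infty}\mathbf{E}_z[h(Z_m);\,\tau>m].
\]
Using the telescoping identity from Step~2 (writing $\mathbf{E}_z[h(Z_m);\tau>m]$ as $h(z)$ plus a partial sum of $\mathbf{E}_z[f(Z_k);\tau>k]$) the last limit is identified with $V(z)$, proving (\ref{T1.2}).

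\emph{Step 4: The conditional limit (\ref{T1.3}).} Inserting a bounded continuous test function $\varphi$ inside the inner expectation in the same Markov decomposition, and combining Brownian scaling with the explicit density $g_t$ and the coupling of Step~3, one obtains
\[
\mathbf{E}_z\bigl[\varphi(n^{-3/2}S_n^{(2)},\,n^{-1/2}S_n);\,\tau>n\bigr]
\sim\frac{\varkappa V(z)}{n^{1/4}}\int_{\mathbb{R}_+\times\mathbb{R}}\varphi(u,v)\,h(u,v)\,g_1(0,0;u,v)\,du\,dv.
\]
Dividing by (\ref{T1.2}) gives the weak convergence, the normalising constant $C$ being fixed by the total-mass-one condition.
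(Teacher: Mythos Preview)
Your overall architecture---corrector bound from Taylor expansion, harmonicity, KMT coupling, Markov restart---matches the paper's. But Step~1, which you correctly flag as the main obstacle, has two genuine gaps.

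First, the derivative bounds are mis-scaled. The Kolmogorov diffusion is anisotropic ($x$ scales like $t^{3/2}$, $y$ like $t^{1/2}$), so $h$ and its derivatives must be measured against $\alpha(x,y)=\max(|x|^{1/3},|y|)$, not $|z|$. The paper obtains $h\asymp\alpha^{1/2}$, $|h_x|\le C\alpha^{-5/2}$, $|h_y|\le C\alpha^{-1/2}$, $|h_{yy}|\le C\alpha^{-3/2}$, etc.\ (Lemma~\ref{lem:bounds.h}); it is these anisotropic estimates, together with $yh_x+\tfrac12h_{yy}=0$, that give the correct corrector decay $|f(z)|\le C\min(1,\alpha(z)^{-3/2-\delta})$ (Lemma~\ref{lem:bound.f}). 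Your isotropic bound $|\partial^{\alpha}h|\le C(1+|z|)^{1/2-|\alpha|}$ is false as stated and would not survive the region $|y|\gg|x|^{1/3}$.

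Second, and more seriously, the summability $\mathbf{E}_z\sum_{k=1}^{\tau-1}|f(Z_k)|<\infty$ does \emph{not} follow from the \cite{DG11} tail bound $\mathbf{P}_z(\tau>k)\le Ck^{-1/4}$ together with moment (i.e.\ upper) estimates on $Z_k$. The problem is anti-concentration: $f$ is only $O(1)$ where $\alpha(Z_k)$ is small, and nothing in your sketch rules out the chain spending too much time there. The paper closes this with a two-dimensional concentration inequality of Friedland--Sodin type (Lemma~\ref{concentration}), $\sup_{u,v}\mathbf{P}(|S_k^{(2)}-u|\le1,|S_k-v|\le1)\le Ck^{-2}$, which is what drives $\mathbf{E}_z[|f(Z_k)|]\le Ck^{-3/4-\delta/2}$ and hence summability. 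This ingredient is missing from your outline and is essential.

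Structurally the paper also takes a different route: it does \emph{not} invoke \cite{DG11}. Instead it first proves existence of $V_0(z)=\lim_n\mathbf{E}_z[h(Z_n);\tau>n]$ by a bootstrap using the stopping time $\nu_n=\min\{k:Z_k\in K_{n,\varepsilon}\}$ (Lemmas~\ref{lem0}--\ref{lem3}), with no a~priori information on $\mathbf{P}_z(\tau>n)$; then it proves (\ref{T1.2}) with $V_0$ in place of $V$; and only afterwards, with the tail asymptotic in hand, does it verify that the series in (\ref{eq:V}) converges and $V_0=V$. Your ordering (prove convergence of (\ref{eq:V}) first, using \cite{DG11}) is in principle viable, but still requires the Friedland--Sodin input to close.
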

{F}rom (\ref{T1.2}) and the total probability formula we obtain
\begin{corollary}
For every random walk satisfying the conditions of Theorem \ref{main} holds
$$
\mathbf{P}_0(A_n)\sim\frac{C}{n^{1/4}}
$$
with
$$
C=\mathbf{E}[V((X,X)), X>0].
$$
\end{corollary}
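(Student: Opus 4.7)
The plan is to reduce the corollary to Theorem~\ref{main} by a one-step Markov decomposition. The starting point $(0,0)$ lies on the boundary of the half-plane and does not belong to $K_+$, so Theorem~\ref{main} does not apply directly. Conditioning on $X_1$ and using the Markov property of $Z_n$, one has
\[
\mathbf{P}_0(A_n) = \mathbf{E}\bigl[\mathbf{P}_{Z_1}(\tau > n-1);\, S_1^{(2)} > 0\bigr] = \mathbf{E}\bigl[\mathbf{P}_{(X_1, X_1)}(\tau > n-1);\, X_1 > 0\bigr].
\]
For every $x > 0$ the point $(x, x)$ lies in $\mathbb{R}_+ \times \mathbb{R}_+ \subset K_+$, so Theorem~\ref{main} applies pointwise and gives
\[
n^{1/4}\,\mathbf{P}_{(x,x)}(\tau > n-1) \longrightarrow \varkappa\, V((x,x)), \qquad n \to \infty.
\]

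The remaining task is to pass the limit inside the expectation. I would do this by dominated convergence, which requires a uniform upper bound of the form $\sup_n n^{1/4}\,\mathbf{P}_{(x,x)}(\tau > n-1) \leq F(x)$ with $\mathbf{E}[F(X); X > 0] < \infty$. Such an inequality is almost certainly a by-product of the proof of Theorem~\ref{main}. In view of the growth of $h$ along the diagonal---a scaling analysis of~(\ref{h-def}) shows that $h(x,x)$ behaves like $x^{1/2}$ as $x \to \infty$, and similarly for $V((x,x))$---one expects $F$ to grow at most linearly in $x$, so that the hypothesis $\mathbf{E}[X^2] = 1$ makes $\mathbf{E}[F(X); X > 0]$ finite.

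Combining the pointwise convergence with dominated convergence then yields
\[
n^{1/4}\,\mathbf{P}_0(A_n) \longrightarrow \varkappa\, \mathbf{E}\bigl[V((X,X));\, X > 0\bigr],
\]
which is the content of the corollary, the factor $\varkappa$ being absorbed into the constant $C$. The entire argument is therefore a straightforward application of Theorem~\ref{main} modulo the dominated-convergence step; as a safeguard one can in any case obtain the matching lower bound for free from Fatou's lemma.

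The main potential obstacle is precisely the uniform domination. Theorem~\ref{main} as stated gives only pointwise asymptotics, whereas what is needed is a bound on $n^{1/4} \mathbf{P}_{(x,x)}(\tau > n-1)$ that is uniform in $n$ and controlled in $x$ by a function integrable against the law of $X_1$ on $\{X_1 > 0\}$. If no such bound drops out directly from the potential-theoretic machinery underlying Theorem~\ref{main}, one would need to produce it by hand---e.g.\ via the supermartingale property (\ref{T1.1}) of $V$ combined with Doob's maximal inequality, or by a truncation-and-coupling argument based on the normal approximation---and this is the only non-routine step in the proof.
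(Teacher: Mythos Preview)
Your approach is exactly the paper's: the paper simply writes ``From (\ref{T1.2}) and the total probability formula we obtain'' the corollary, which is precisely your one-step conditioning on $X_1$ followed by an application of Theorem~\ref{main} at the shifted starting point $(X_1,X_1)$. You have in fact gone further than the paper by flagging the dominated-convergence issue and noting that the needed uniform bound (of order $(1+x)^{1/2}$ along the diagonal, integrable under $\mathbf{E}X^2<\infty$) should fall out of the estimates in Section~\ref{sect.V.is.good}; the paper leaves this implicit.
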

\subsection{Local asymptotics for integrated random walks.}
Caravenna and Deuschel \cite{CD08} have proven a local limit theorem for $Z_n$ under the assumption
that the distribution of $X$ is absolutely continuous. Using similar arguments one can show that if
$X$ is $\mathbb{Z}$-valued and aperiodic then
\begin{equation}\label{Loc.1}
\sup_{\tilde{z}}\left| n^2\mathbf{P}_z(Z_n=\tilde{z})-
g_1\left(0,0;\frac{\tilde{x}}{n^{3/2}},\frac{\tilde{y}}{n^{1/2}}\right)\right|\to0.
\end{equation}
Combining this unconditioned local limit theorem with (\ref{T1.3}) one can derive a conditional local limit theorem:
\begin{equation}\label{Loc.2}
\sup_{\tilde{z}}\left| n^{2+1/4}\mathbf{P}_z(Z_n=\tilde{z},\tau>n)-
\varkappa V(z)h\left(\frac{\tilde{x}}{n^{3/2}},\frac{\tilde{y}}{n^{1/2}}\right)
g_1\left(0,0;\frac{\tilde{x}}{n^{3/2}},\frac{\tilde{y}}{n^{1/2}}\right)\right|\to0.
\end{equation}
Furthermore, for every fixed $\tilde{z}\in K_+$,
\begin{equation}
\label{Loc.3}
\lim_{n\to\infty}n^{2+1/2}\mathbf{P}_z(Z_n=\tilde{z},\tau>n)=V(z)V'(z)
\end{equation}
with some positive function $V'$.

The proof of (\ref{Loc.2}) and (\ref{Loc.3}) repeats virtually word by word the proof of local
asymptotics in \cite{DW11}, see Subsection 1.4 and Section 6 there. For this reason we 
do not give a proof of these statements.

Having (\ref{Loc.2}) one can easily show that
$$
\mathbf{P}_0(A_n|Z_{n+2}=0)\sim\frac{C}{n^{1/2}}
$$
with some positive constant $C$. A slightly weaker form of this relation was conjectured by 
Caravenna and Deuschel \cite[equation (1.22)]{CD08}.

 Aurzada,  Dereich and Lifshits \cite{ADL12} have recently obtained lower and upper bounds 
for the integrated simple random walk, 
$$
cn^{-1/2}\le \mathbf P_0(S^{(2)}_1\ge 0,\ldots,S^{(2)}_{4n}\ge 0 | S_{4n}=0,S^{(2)}_{4n}=0)\le Cn^{-1/2}. 
$$

\subsection{Organisation of the paper} In \cite{DW11} we have suggested a method
of investigating exit times from cones for random walks. In the present paper we have a Markov
chain instead of a random walk with independent increments. But it turns out that this fact
is not important, and the method from \cite{DW11} works also for Markov processes. 

The first step consists in construction of the harmonic function $V_0(z)$. As in \cite{DW11} we
start from the harmonic function for the corresponding limiting process. Obviously,
$$
\left(\frac{S_{[nt]}^{(2)}}{n^{3/2}},\frac{S_{[nt]}}{n^{1/2}}\right)\Rightarrow
\left(\int_0^t B_s ds,B_t\right).
$$

We then define for every $z\in\mathbb{R}_+\times\mathbb{R}$
\begin{equation}
\label{limit}
V_0(z)=\lim_{n\to\infty}\mathbf{E}_z[h(Z_n),\tau>n].
\end{equation}
The justification of this formal definition is the most technical part of our approach.
It is worth mentioning that we can not just repeat the proof from  \cite{DW11}. There
we used a certain a-priori information on the behaviour of first exit times. (It was some
moment inequalities, which were already known in the literature.) For integrated random walks 
we do not have such information and, therefore, should find an alternative way of justification
of (\ref{limit}). This is done in Section \ref{sect.V.is.good}.

Having constructed harmonic function for $Z_n$ we follow our approach in \cite{DW10,DW11} and apply 
the KMT-coupling to obtain the asymptotics for $\tau$. (This explains our moment condition 
in Theorem \ref{main}.) For details see Section \ref{Sec.tau}. We omit the proof of (\ref{T1.3}),
since it is again a repetition of the corresponding arguments in \cite{DW11}. 
For integrated random walks a strong approximation was used in Aurzada and Dereich \cite{AD11}
to obtain (\ref{AD}). This formula shows that a direct, without use of potential theory, 
application of coupling produces a superfluous logarithmic terms even under exponential moment assumption.

Finally we show that the definitions (\ref{eq:V}) and (\ref{limit}) are equivalent and $V(z)=V_0(z)$. 

\subsection{Conclusion}
In our previous works \cite{DW10,DW11} we showed that Brownian asymptotics for exit times 
can be transferred to  exit times for multidimensional random walks. 
In the present work we consider an integrated random walk which can be viewed as  a two-dimensional Markov chain.  
We study exit times from a half-space and transfer the corresponding results for the Kolmogorov diffusion. 
These examples make plausible the following hypothesis. 

Let $X_n$ be a Markov chain, $D$ be an unbounded domain and 
$\tau_D:=\min\{n\ge 1: X_n \notin D\}$. Assume that this Markov chain, properly scaled,  
converges as a process to a diffusion $Y_t,t\ge 0$.  Assume also that 
the exit time of this diffusion $T_D:=\min\{t\ge 0: Y_t \notin D\}$ has the following 
asymptotics
$$
\mathbf P_y(T_D>t)\sim \frac{h(y)}{t^p},\quad t\to\infty,
$$
where $h(y)$ is the corresponding harmonic function of the killed diffusion  $Y_{t\wedge T_D}$. Then, there 
exists a positive harmonic function $V(x)$ for the killed Markov chain ${X_{n\wedge \tau_D}}$ such that 
$$
\mathbf P_x(\tau_D>n)\sim \frac{V(x)}{n^p},\quad n\to\infty.
$$
Naturally, this general theorem will require some moment assumptions and some 
assumptions on the smoothness of the unbounded domain $D$. Since we have a convergence of processes the domain $D$ 
should have certain scaling properties. Hence it seems natural 
for the domain $D$ to be a cone, at least asymptotically.

\section{Construction of harmonic function}\label{sect.V.is.good}
This section is devoted to the construction of the harmonic function
$V_0$. Let
\begin{align}
  Y_0&=h(z),\nonumber\\
  \label{eq:defn.y}
  Y_{n+1}&=h(Z_{n+1})-\sum_{k=0}^n f(Z_k), \quad n\ge 0.
\end{align}
\begin{lemma}\label{lem.martingale}
  The sequence $Y_n$ defined in (\ref{eq:defn.y}) is a martingale. 
\end{lemma}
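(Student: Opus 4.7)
The plan is to verify the two requirements of a martingale sequence with respect to the natural filtration $\mathcal{F}_n=\sigma(X_1,\ldots,X_n)$: integrability of each $Y_n$, and the one-step conditional identity $\mathbf{E}[Y_{n+1}\mid\mathcal{F}_n]=Y_n$.

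\textbf{Martingale identity.} This is the easy part and is essentially algebraic. The pair $Z_n=(S_n^{(2)},S_n)$ is a genuine (time-homogeneous) Markov chain on $\mathbb{R}^2$, since the transition from $Z_n$ to $Z_{n+1}=(S_n^{(2)}+S_n+X_{n+1},S_n+X_{n+1})$ depends only on $Z_n$ and the independent increment $X_{n+1}$. Rearranging the defining relation (\ref{eq:defn.f}) gives
\begin{equation*}
\mathbf{E}_z h(Z_1)=h(z)+f(z)\quad\text{for every }z\in\mathbb{R}^2.
\end{equation*}
Applying this at $z=Z_n$ and using the Markov property, one obtains $\mathbf{E}[h(Z_{n+1})\mid\mathcal{F}_n]=h(Z_n)+f(Z_n)$. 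Plugging this into the definition of $Y_{n+1}$ yields
\begin{equation*}
\mathbf{E}[Y_{n+1}\mid\mathcal{F}_n]=h(Z_n)+f(Z_n)-\sum_{k=0}^{n}f(Z_k)=h(Z_n)-\sum_{k=0}^{n-1}f(Z_k)=Y_n,
\end{equation*}
where the last equality is the definition of $Y_n$ (with the convention $Y_0=h(z)$ corresponding to an empty sum). Note that this step uses the Markov property of $Z_n$ in an essential way and nothing more; it does not depend on whether $Z_n$ lives in $\mathbb{R}_+\times\mathbb{R}$ or not, which is precisely why $h$ was extended by zero to the whole plane.

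\textbf{Integrability.} This is the main obstacle, and it is the reason the extension of $h$ to $\mathbb{R}^2$ and the moment assumption $\mathbf{E}|X|^{2+\delta}<\infty$ are needed. One must show $\mathbf{E}_z|h(Z_n)|<\infty$ and $\mathbf{E}_z|f(Z_k)|<\infty$ for all $k,n\ge 0$. The route is to use the explicit representation (\ref{h-def}) together with the known asymptotics of the confluent hypergeometric function $U(a,b,\cdot)$ to derive a polynomial bound of the form
\begin{equation*}
|h(x,y)|\le C\bigl(1+|x|^{1/6}+|y|^{1/2}\bigr),\qquad (x,y)\in\mathbb{R}^2,
\end{equation*}
which is consistent with the anisotropic scaling $h(c^3x,cy)=c^{1/2}h(x,y)$ forced by the Brownian asymptotics (\ref{asym.bm}). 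Given such a bound and the fact that $S_n^{(2)}$ and $S_n$ have all moments of order $2+\delta$ under the hypothesis, an elementary computation shows $\mathbf{E}_z|h(Z_n)|<\infty$, so $|f(z)|\le\mathbf{E}_z|h(Z_1)|+|h(z)|$ is finite pointwise and also integrable along the trajectory.

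\textbf{What I expect to be hardest.} The algebraic martingale identity is immediate once one writes down the Markov property. The technical work is the polynomial growth estimate on $h$: one must handle the two regimes $y\ge 0$ and $y<0$ separately using the large-argument asymptotics of $U(a,b,z)$, and also control the behaviour as $x\downarrow 0$ where $h(x,y)$ has a non-trivial transition between the interior bound and the zero extension. Once this bound is in place, the rest of the proof of Lemma~\ref{lem.martingale} is routine.
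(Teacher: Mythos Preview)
Your approach is correct and matches the paper's: the martingale identity is verified by the same one-line computation $\mathbf{E}[Y_{n+1}-Y_n\mid\mathcal{F}_n]=\mathbf{E}[h(Z_{n+1})-h(Z_n)-f(Z_n)\mid Z_n]=f(Z_n)-f(Z_n)=0$, using only the Markov property of $Z_n$ and the definition of $f$. The paper's proof is in fact terser than yours---it does not address integrability at all in this lemma and simply writes the three-line conditional-expectation identity; the polynomial growth bound on $h$ that you correctly anticipate (and which does yield integrability) is established separately afterwards in Lemma~\ref{lem:bounds.h}.
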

\begin{proof}
  Clearly,
  \begin{align*}
    \mathbf{E}_z\left[Y_{n+1}-Y_n|\mathcal{F}_n\right]
    &=\mathbf{E}_{z}\left[\left(h(Z_{n+1})-h(Z_n)-f(Z_n)\right)|\mathcal{F}_n\right]\\
    &=-f(Z(n))+\mathbf{E}_{z}\left[\left(h(Z_{n+1})-h(Z_n)\right)|Z_n\right]\\
    &=-f(Z_n)+f(Z_n)=0,
  \end{align*}
  where we used the definition of the function $f$ in
  \eqref{eq:defn.f}.
\end{proof}
Before proceeding any further we need to study some properties of the
functions $h(x,y)$ and $f(x,y)$. 
\begin{lemma}
  \label{derivatives}
  Function $h$ has the following partial derivatives, 
\begin{equation}
\label{partial.x}
\frac{\partial^i h(x,y)}{\partial x^i}=
\begin{cases}
C_i \left(\frac{2}{9}\right)^{1/6}\frac{y}{x^{1/6+i}}U\left(\frac{1}{6}+i,\frac{4}{3},\frac{2y^3}{9x}\right),\quad x\geq0, y\ge 0 \\
-\left(\frac{2}{9}\right)^{1/6}\frac{1}{6}\frac{y}{x^{1/6+i}}e^{2y^3/9x}U\left(\frac{7}{6}-i,\frac{4}{3},-\frac{2y^3}{9x}\right),\quad x\geq0,y<0,
\end{cases}
\end{equation}
for $i\ge 0$ and 
\begin{equation}
\label{partial.xy}
\frac{\partial^{i+1} h(x,y)}{\partial x^i\partial y}=
\begin{cases}
\frac{-3}{i-1/6}C_i \left(\frac{2}{9}\right)^{1/6}\frac{1}{x^{1/6+i}}U\left(\frac{1}{6}+i,\frac{1}{3},\frac{2y^3}{9x}\right),\quad x\geq0,y\ge 0 \\
\left(\frac{2}{9}\right)^{1/6}\frac{1}{2}\frac{1}{x^{1/6+i}}e^{2y^3/9x}U\left(\frac{7}{6}-i-1,\frac{1}{3},-\frac{2y^3}{9x}\right),\quad x\geq0,y<0.
\end{cases}
\end{equation}
Here, $C_0=1$ and $C_{i+1}=-C_i(i+1/6)(i-1/6)$ for $i\ge 0$.  
\end{lemma}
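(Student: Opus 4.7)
My plan is to prove both (\ref{partial.x}) and (\ref{partial.xy}) by induction on $i\ge 0$, treating the $y\ge 0$ and $y<0$ branches in parallel. The base case $i=0$ of (\ref{partial.x}) is just the definition (\ref{h-def}) of $h$ with $C_0=1$, and the $i=0$ case of (\ref{partial.xy}) follows by differentiating (\ref{h-def}) directly in $y$, using $\partial w/\partial y = 3w/y$ where $w = 2y^3/(9x)$.

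For the inductive step of (\ref{partial.x}) on the $y\ge 0$ branch, I would differentiate the assumed formula in $x$. Since $\partial w/\partial x = -w/x$, applying the derivative identity $\frac{d}{dz}U(a,b,z)=-aU(a+1,b+1,z)$ produces a linear combination of $U(\tfrac16+i,\tfrac43,w)$ and $w\,U(\tfrac76+i,\tfrac73,w)$. To collapse this into a single multiple of $U(\tfrac76+i,\tfrac43,w)$, I would invoke the contiguous relation
\begin{equation*}
U(a,b,z) \;=\; z\,U(a+1,b+1,z) \,+\, (a-b+1)\,U(a+1,b,z),
\end{equation*}
which is one of the standard three-term recurrences for $U$ and is readily derived from the Kummer ODE $zU''+(b-z)U'-aU=0$ together with the derivative formula. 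Specialising $a=\tfrac16+i$, $b=\tfrac43$ so that $a-b+1 = i-\tfrac16$, and tracking the constants, one recovers precisely the recursion $C_{i+1} = -C_i(i+\tfrac16)(i-\tfrac16)$.

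The mixed derivative (\ref{partial.xy}) is produced by differentiating the already-verified (\ref{partial.x}) in $y$: the chain rule with $\partial w/\partial y = 3w/y$ combined with the same $U$-derivative identity gives a combination that collapses to a multiple of $U(\tfrac16+i,\tfrac13,w)$ via an analogous contiguous relation lowering the middle parameter from $\tfrac43$ to $\tfrac13$. The $y<0$ branches of both formulas go through by the same scheme: the exponential factor $e^{2y^3/9x}$ differentiates cleanly with chain-rule factor $-w/x$ (resp.\ $3w/y$), and the downward shift $U(\tfrac76-i,b,-w)\mapsto U(\tfrac76-(i+1),b,-w)$ in the first parameter is handled by the dual three-term recurrence
\begin{equation*}
U(a-1,b,z) \;=\; (a+z-b)\,U(a,b,z) \,+\, az\,U(a+1,b+1,z),
\end{equation*}
obtained from the same family of identities.

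The entire argument amounts to bookkeeping in the calculus of contiguous relations for $U$, with no analytic subtlety. I expect the main practical obstacle to be careful sign- and constant-tracking in the $y<0$ branch, where the negative third argument of $U$, the exponential prefactor, and the downward shift in the first parameter must all be threaded through the induction consistently.
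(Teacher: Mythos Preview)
Your proposal is correct and follows essentially the same route as the paper: induction on $i$ for (\ref{partial.x}), differentiating the result in $y$ for (\ref{partial.xy}), treating the two branches $y\ge 0$ and $y<0$ separately and collapsing the outcome via contiguous relations for $U$. The only cosmetic difference is that the paper cites the combined identities (13.4.23), (13.4.24), (13.4.26), (13.4.27) of Abramowitz--Stegun directly, whereas you decompose each step into the derivative formula $U'(a,b,z)=-aU(a+1,b+1,z)$ followed by an explicit three-term recurrence; these amount to the same identities.
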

\begin{proof}
We will prove (\ref{partial.x}) by induction. The base of induction $i=0$ corresponds to the definition 
of $h$. Now suppose that (\ref{partial.x}) is true for $i$ and prove it for $i+1$.  

Consider first $y\ge 0$. By the induction hypothesis,  
\begin{align*}
  \frac{\partial^{i+1} h(x,y)}{\partial x^{i+1}}&=
C_i \left(\frac{2}{9}\right)^{1/6} 
\frac{\partial}{\partial x} \left[\frac{y}{x^{1/6+i}}U\left(\frac{1}{6}+i,\frac{4}{3},\frac{2y^3}{9x}\right)\right]\\
&= -C_i \left(\frac{2}{9}\right)^{1/6}\frac{y}{x^{1/6+i+1}}\biggl((1/6+i)U\left(\frac{1}{6}+i,\frac{4}{3},\frac{2y^3}{9x}\right)\\
&\hspace{1cm}+ \frac{2y^3}{9x}U'\left(\frac{1}{6}+i,\frac{4}{3},\frac{2y^3}{9x}\right)\biggr)\\
&=-C_i \left(\frac{2}{9}\right)^{1/6}\left(i+1/6\right)\left(i-1/6\right)\frac{y}{x^{1/6+i+1}}U\left(\frac{1}{6}+i+1,\frac{4}{3},\frac{2y^3}{9x}\right),
\end{align*}
where we applied (13.4.23) of \cite{AS64} in the last step. Recalling the definition of $C_{i+1}$ we see that 
(\ref{partial.x}) holds for $i+1$ and positive $y$. 

Consider second the case $y<0$.  By the induction hypothesis,
\begin{align*}
  \frac{\partial^{i+1} h(x,y)}{\partial x^{i+1}}&=
-\left(\frac{2}{9}\right)^{1/6}\frac{1}{6} \frac{\partial}{\partial x} \left[e^{2y^3/9x}\frac{y}{x^{1/6+i}}U\left(\frac{7}{6}-i,\frac{4}{3},-\frac{2y^3}{9x}\right)\right]\\
&= -\left(\frac{2}{9}\right)^{1/6}\frac{1}{6}\frac{y}{x^{1/6+i+1}}e^{2y^3/9x}
\biggl(-(1/6+i)U\left(\frac{7}{6}-i,\frac{4}{3},-\frac{2y^3}{9x}\right)\\
&\hspace{1cm}-\frac{2y^3}{9x}U\left(\frac{7}{6}-i,\frac{4}{3},-\frac{2y^3}{9x}\right)
+\frac{2y^3}{9x}U'\left(\frac{7}{6}-i,\frac{4}{3},-\frac{2y^3}{9x}\right)\biggr)\\
&=
-\left(\frac{2}{9}\right)^{1/6}\frac{1}{6}\frac{y}{x^{1/6+i+1}}e^{2y^3/9x}U\left(\frac{7}{6}-i-1,\frac{4}{3},-\frac{2y^3}{9x}\right),
\end{align*}
where we applied (13.4.26) of \cite{AS64} in the final step. 
This proves (\ref{partial.x}) for negative values of $y$. 

The proof of (\ref{partial.xy}) is similar. First we prove it for $y\ge 0$. 
Using (\ref{partial.x}), 
  \begin{align*}
    \frac{\partial^{i+1} h(x,y)}{\partial x^i\partial y}&=C_i \left(\frac{2}{9}\right)^{1/6}
  \frac{\partial}{\partial y }
     \frac{y}{x^{1/6+i}}U\left(\frac{1}{6}+i,\frac{4}{3},\frac{2y^3}{9x}\right)\\
  &=C_i \left(\frac{2}{9}\right)^{1/6}\frac{-3}{x^{1/6+i}}\biggl(
      -\frac{1}{3}U\left(\frac{1}{6}+i,\frac{4}{3},\frac{2y^3}{9x}\right)\\
  &\hspace{1cm}-\frac{2y^3}{9x}U'\left(\frac{1}{6}+i,\frac{4}{3},\frac{2y^3}{9x}\right)\biggr)\\
  &=C_i \left(\frac{2}{9}\right)^{1/6}\frac{-3}{(i-1/6)x^{1/6+i}}U\left(\frac{1}{6}+i,\frac{1}{3},\frac{2y^3}{9x}\right),
\end{align*}
where we used (13.4.24) of \cite{AS64} in the final step. Finally, for $y<0$, 
\begin{align*}
   \frac{\partial^{i+1} h(x,y)}{\partial x^i\partial y}&=
  -\left(\frac{2}{9}\right)^{1/6}\frac{1}{6}\frac{\partial }{\partial y}\frac{y}{x^{1/6+i}}e^{2y^3/9x}U\left(\frac{7}{6}-i,\frac{4}{3},-\frac{2y^3}{9x}\right)\\
&=-\left(\frac{2}{9}\right)^{1/6}\frac{1}{6}\frac{-3}{x^{1/6+i}}
\biggl(\left(-\frac{1}{3}-\frac{2y^3}{9x}\right)U\left(\frac{7}{6}-i,\frac{4}{3},-\frac{2y^3}{9x}\right)\\
&\hspace{1cm}+\frac{2y^3}{9x}U'\left(\frac{7}{6}-i,\frac{4}{3},-\frac{2y^3}{9x}\right)\\
&=\left(\frac{2}{9}\right)^{1/6}\frac{1}{2}\frac{1}{x^{1/6+i}}e^{2y^3/9x}U\left(\frac{7}{6}-i-1,\frac{1}{3},-\frac{2y^3}{9x}\right)
\biggr),
\end{align*}
where we used (13.4.27) of \cite{AS64} in the final step. 
\end{proof}

Let 
\begin{equation}
\label{alpha}
\alpha(x,y)=\max(|x|^{1/3},|y|).
\end{equation}
\begin{lemma}
  \label{lem:bounds.h}
There exist positive constants $c$ and $C$ such that
\begin{equation}
\label{h-bound}
c\sqrt{\alpha(z)}\leq h(z)\leq C \sqrt{\alpha(z)},\quad z\in\mathbb{R}_+^2.
\end{equation}
Furthermore, the upper bound is valid for all $z$. Function 
$h$ is at least $C^3$ continuous except the half-line $\{z:x=0,y\geq0\}$.  
\\
For the derivatives we have
\begin{align*}
|h_x(x,y)|\le C \alpha(x,y)^{-2.5},&
|h_{xx}(x,y)|\le C \alpha(x,y)^{-5.5},&
|h_{xxx}(x,y)|\le C \alpha(x,y)^{-8.5},\\
|h_y(x,y)|\le C \alpha(x,y)^{-0.5}, &
|h_{yx}(x,y)|\le C \alpha(x,y)^{-3.5},&
|h_{yxx}(x,y)|\le C \alpha(x,y)^{-6.5},\\
|h_{yyx}(x,y)|\le C \alpha(x,y)^{-4.5}&
|h_{yy}(x,y)|\le C \alpha(x,y)^{-1.5},&
|h_{yyy}(x,y)|\le C \alpha(x,y)^{-2.5}.
\end{align*}
 \end{lemma}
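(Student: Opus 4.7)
The plan is to exploit the scaling invariance of $h$ together with the explicit formulas from Lemma \ref{derivatives} in order to reduce every estimate to a uniform bound on the compact level set $\{\alpha=1\}$, and then to read off the required bounds from the standard asymptotics of the confluent hypergeometric function $U(a,b,\cdot)$.

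First I would establish the scaling. Under $T_\lambda(x,y):=(\lambda^3 x,\lambda y)$ with $\lambda>0$, the argument $2y^3/(9x)$ is invariant, the prefactor $y/x^{1/6}$ scales as $\lambda^{1/2}$, and the exponential factor in the $y<0$ branch of \eqref{h-def} is also invariant. Consequently $h\circ T_\lambda=\lambda^{1/2}h$, and differentiating yields $(\partial_x^i\partial_y^j h)\circ T_\lambda=\lambda^{1/2-3i-j}\,\partial_x^i\partial_y^j h$. Since $\alpha\circ T_\lambda=\lambda\alpha$, proving the analogous bounds on the compact set $\Sigma:=\{\alpha=1\}$ (intersected with the appropriate domain) with absolute constants immediately produces the claimed global inequalities, and every one of the exponents $-2.5,-5.5,-8.5,-0.5,-3.5,-6.5,-4.5,-1.5,-2.5$ listed for the nine derivatives matches $1/2-3i-j$ for the corresponding pair $(i,j)$.

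Next I would substitute the three standard regimes of $U$ into the formulas of Lemma \ref{derivatives}: $U(a,4/3,w)\sim(\Gamma(1/3)/\Gamma(a))\,w^{-1/3}$ as $w\to 0^+$; $U(a,1/3,\cdot)$ is continuous (in fact analytic) at $w=0$; and $U(a,b,w)\sim w^{-a}$ as $w\to\infty$ with the same type of bounds on its derivatives. On $\Sigma\cap(\mathbb{R}_+\times\mathbb{R})$ the argument $w=\pm 2y^3/(9x)$ sweeps from $0$ to $\infty$, with the split $w\le 1$ versus $w\ge 1$ corresponding exactly to $\alpha=x^{1/3}$ versus $\alpha=|y|$. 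A case-by-case check then shows, for each of the nine derivative formulas, that the explicit prefactor powers of $x$ and $y$ exactly absorb both the $w^{-1/3}$ singularity of $U$ at $0$ and the $w^{-a}$ decay at $\infty$, leaving a bounded continuous function on $\Sigma$. The lower bound $h\ge c\sqrt\alpha$ on $\mathbb{R}_+^2$ then follows because the continuous extension of $h$ to $\Sigma\cap\mathbb{R}_+^2$ is strictly positive: it has the strictly positive limit $Cx^{1/6}$ at the endpoint $y=0$ (the $y$ in the prefactor kills the $w^{-1/3}$ singularity of $U(1/6,4/3,\cdot)$) and the strictly positive limit $Cy^{1/2}$ as $x\to 0^+$; compactness then gives a uniform lower bound.

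The global upper bound when $y<0$ is handled by the exponential factor $e^{2y^3/(9x)}=e^{-|w|}$, which decays super-polynomially as $|y|^3/x\to\infty$ and dominates the $w^{-a}$ growth of $U$ in that regime; outside $\mathbb{R}_+\times\mathbb{R}$ the upper bound is trivial because $h$ was extended by zero. Finally, $C^3$ smoothness away from the ray $\{x=0,\,y\ge 0\}$ is inherited from the real analyticity of $U(a,b,\cdot)$ on $(0,\infty)$, together with a matching of the two branches at $y=0$ using their shared $w\to 0$ asymptotics. The main obstacle I foresee is not conceptual but combinatorial: for each of the nine derivatives one must verify that the prefactor powers, the $w^{-1/3}$-type singularity of $U$ at $0$, and the $w^{-a}$ decay at $\infty$ combine to give precisely the stated exponent $1/2-3i-j$ in both regimes, and that none of the relevant Gamma-function coefficients vanish, so that the cancellations produce a genuine bounded continuous extension rather than a spurious zero.
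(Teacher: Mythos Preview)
Your approach is correct and is essentially a cleaner reorganisation of the paper's own proof: the paper does the same three-regime case analysis on $w=2y^3/(9x)$ (large, small, intermediate) using the same $U$-asymptotics, which is exactly the restriction to $\Sigma=\{\alpha=1\}$ made explicit by your scaling relation $h\circ T_\lambda=\lambda^{1/2}h$. One small correction: Lemma~\ref{derivatives} only supplies explicit formulas for $\partial_x^i h$ and $\partial_x^i\partial_y h$, not for the three second-and-higher $y$-derivatives $h_{yy},h_{yyx},h_{yyy}$; the paper obtains those from the generator equation $yh_x+\tfrac12 h_{yy}=0$ (so $h_{yy}=-2yh_x$, $h_{yyx}=-2yh_{xx}$, $h_{yyy}=-2h_x-2yh_{xy}$), and you will need the same device to complete the ``case-by-case check'' on $\Sigma$ near the endpoint $(0,1)$.
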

Here and throughout the text we denote as $C,c$ some generic
constants.
\begin{proof}
The estimates will follow from Lemma~\ref{derivatives} and the following properties of 
the confluent hypergeometric functions, 
see (13.1.8), (13.5.8) and (13.5.10) of \cite{AS64},
\begin{align}
\label{u.inf}
U(a,b,s)&\sim s^{-a},\quad s\to\infty,\\
\label{u.zero}
U(a,b,s)&\sim \frac{\Gamma(b-1)}{\Gamma(a)}s^{1-b},\quad s\to 0, b\in (1,2),\\
\label{u.zero.2}
U(a,b,s)&\sim \frac{\Gamma(1-b)}{\Gamma(1+a-b)},\quad s\to 0, b\in (0,1).
\end{align}
Asymptotics (\ref{u.inf}), (\ref{u.zero}) and the definition of $h$ immediately imply (\ref{h-bound}).

Function $h$ is obviously infinitely differentiable when $x<0$ or $x>0$. The only problematic zone 
is $x=0,y<0$. Since $h(x,y)=0$ for $x<0,y<0$ all derivatives are equal to $0$. Using the  expressions 
for derivatives found in Lemma~\ref{derivatives} one can immediately see that 
derivatives of $h(x,y)$ go to $0$ as $x\to 0$ for $y<0$ thanks to the exponent $e^{2y^3/9x}$. 

We  continue with  partial derivatives with respect to $x$. 
First, using (\ref{partial.x}) and (\ref{u.inf}) for sufficiently large $A>0$ and $y^3/x>A$, 
$$
\left|\frac{\partial^i h(x,y)}{\partial x^i}\right| \le 
C\frac{y}{x^{1/6+i}}\left(\frac{2y^3}{9x}\right)^{-1/6-i} 
\le C y^{1/2-3i},\quad i\ge 0.
$$
 
For $y<0$, sufficiently large $A$ and $-y^3/9x>A$, 
the same inequality hold since $e^{2y^3/9x}$ is decreasing much faster than 
any power function as $y^3/x\to -\infty$. 
Next, using (\ref{partial.x}) and (\ref{u.zero}) for sufficiently small $\varepsilon>0$ 
and $y: |y|^3/x\le \varepsilon$,
\begin{align*}
\left|\frac{\partial^i h(x,y)}{\partial x^i}\right| \le 
C\frac{y}{x^{1/6+i}}\left(\frac{2y^3}{9x}\right)^{-1/3} 
\le C x^{1/6-i}.
 \end{align*}
Finally, when $y^3/x\in (\varepsilon,A)$, 
\begin{align*}
\left|\frac{\partial^i h(x,y)}{\partial x^i}\right| \le 
C.
 \end{align*}
We can summarise this in one formula 
\begin{align*}
\left|\frac{\partial^i h(x,y)}{\partial x^i}\right| \le 
C \alpha(x,y)^{-1/6-3i},
\end{align*}
where $\alpha(x,y)$ is defined in (\ref{alpha}). This proves the first line of estimates. 

To prove the second line we use a  similar approach. 
First, using (\ref{partial.xy}) and (\ref{u.inf}) for sufficiently large $A>0$ and $y^3/x>A$, 
$$
\left|\frac{\partial^{i+1} h(x,y)}{\partial x^i\partial y}\right| \le 
C\frac{1}{x^{1/6+i}}\left(\frac{2y^3}{9x}\right)^{-1/6-i} 
\le C y^{-1/2-3i},\quad i\ge 0.
$$
 
For $y<0$, sufficiently large $A$ and $-y^3/9x>A$, 
the same inequality hold since $e^{2y^3/9x}$ is decreasing much faster than 
any power function as $y^3/x\to -\infty$. 
Next, using (\ref{partial.x}) and (\ref{u.zero.2}) for sufficiently small $\varepsilon>0$ 
and $y: |y|^3/x\le \varepsilon$,
\begin{align*}
\left|\frac{\partial^{i+1} h(x,y)}{\partial x^i\partial y}\right| \le 
C\frac{1}{x^{1/6+i}} 
= C x^{-1/6-i}.
 \end{align*}
Finally, when $y^3/x\in (\varepsilon,A)$, 
\begin{align*}
\left|\frac{\partial^{i+1} h(x,y)}{\partial x^i\partial y}\right| \le 
C
 \end{align*}
We can summarise this in one formula 
\begin{align*}
\left|\frac{\partial^{i+1} h(x,y)}{\partial x^i\partial y}\right| \le 
C \alpha(x,y)^{-1/2-3i},
\end{align*}
where $\alpha(x,y)$ is defined in (\ref{alpha}). This proves the second  line of estimates. 

To prove the third line we are using fact that $h_{yy}+0.5yh_x=0$. Hence,
$$
|h_{yy}(x,y)|\le C|y||h_x(x,y)|\le C\alpha(x,y)\alpha(x,y)^{-2.5}\le C\alpha(x,y)^{-1.5}.
$$
Next, 
\begin{align*}
  |h_{yyx}(x,y)|\le C|y||h_{xx}(x,y)|\le C\alpha(x,y)
\alpha(x,y)^{-5.5}\le C\alpha(x,y)^{-4.5}.
\end{align*}
Finally, 
\begin{align*}
  |h_{yyy}(x,y)|&\le C|h_x(x,y)|+C|y||h_{xy}(x,y)|\\
&\le 
C\alpha(x,y)^{-2.5}+C\alpha(x,y)\alpha(x,y)^{-3.5}\le C\alpha(x,y)^{-2.5}.
\end{align*}
The proof is complete. 

\end{proof}
Next we require a bound on $f(x,y)$.
\begin{lemma}
  \label{lem:bound.f}
  Let the assumptions of Lemma~\ref{lem:bounds.h} hold and $f$ is
  defined by \eqref{eq:defn.f}.  Let the moment assumptions
  hold. Then,
  $$
    |f(x,y)| \le C \min(1,\alpha(x,y)^{-3/2-\delta}),\quad (x,y)\in \mathbb{R}_+\times\mathbb{R},
  $$
  for some $\delta>0$.
\end{lemma}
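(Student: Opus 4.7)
\noindent\emph{Proof plan.} The crude bound $|f|\le C$ on $\{\alpha(z)\le\alpha_0\}$ follows at once from $|h(z)|\le C\sqrt{\alpha(z)}$ (Lemma~\ref{lem:bounds.h}) and $\sqrt{\alpha(Z_1)}\le C(\sqrt{\alpha(z)}+\sqrt{|X|})$, whose expectation is finite for any $z$ with bounded $\alpha(z)$. So I restrict to $\alpha:=\alpha(x,y)\ge\alpha_0$ large, with the target $|f(x,y)|\le C\alpha^{-3/2-\delta}$. Following the standard truncation paradigm, I fix a small constant $\theta\in(0,1)$ and decompose
\[
  f(x,y)=\mathbf{E}\bigl[(h(Z_1)-h(z))\mathbf{1}_{\{|X|\le\theta\alpha\}}\bigr]+\mathbf{E}\bigl[(h(Z_1)-h(z))\mathbf{1}_{\{|X|>\theta\alpha\}}\bigr]=:E_b+E_t.
\]

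For the tail $E_t$, Markov's inequality on $\mathbf{E}|X|^{2+\delta}$ gives $\mathbf{P}(|X|>\theta\alpha)\le C\alpha^{-(2+\delta)}$; combined with $|h(Z_1)|\le C(\sqrt{\alpha}+\sqrt{|X|})$ and the bound $\sqrt{|X|}\le|X|^{2+\delta}(\theta\alpha)^{-3/2-\delta}$ valid on $\{|X|>\theta\alpha\}$, this yields $|E_t|\le C\alpha^{-3/2-\delta}$. For the bulk $E_b$, I plan to Taylor-expand $h$ about $(x,y)$ to third order in the increment $\Delta=(y+X,X)$ and take the expectation. Using $\mathbf{E}X=0$ and $\mathbf{E}X^2=1$, the leading first- and second-order contributions combine into $yh_x(z)+\tfrac12 h_{yy}(z)=\mathcal{D}h(z)=0$, leaving three sources of error: (i) the surviving part $\tfrac12 y^2h_{xx}(z)+\tfrac12 h_{xx}(z)+h_{xy}(z)$, which is $O(\alpha^{-7/2})$ by Lemma~\ref{lem:bounds.h} and $|y|\le\alpha$; (ii) truncation corrections, the sharpest being $\mathbf{E}[X^2;|X|>\theta\alpha]\cdot h_{yy}=O(\alpha^{-\delta})\cdot O(\alpha^{-3/2})=O(\alpha^{-3/2-\delta})$; and (iii) the third-order Taylor remainder, whose dominant contribution is $\mathbf{E}[|X|^3;|X|\le\theta\alpha]\cdot h_{yyy}=O(\alpha^{1-\delta})\cdot O(\alpha^{-5/2})=O(\alpha^{-3/2-\delta})$, after assuming without loss of generality that $\delta\le 1$. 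All remaining error terms come with strictly better powers of $\alpha^{-1}$, so $|E_b|\le C\alpha^{-3/2-\delta}$.

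The main obstacle to a naive execution is that $h$ extended by zero is discontinuous along the half-line $\{x=0,\,y\ge 0\}$, so Taylor expansion is not legitimate whenever the trajectory from $z$ to $Z_1$ can approach this set. I dispose of this in two steps. First, when $y\ge 0$ and $\alpha\ge 1$, either $\alpha=y$, giving $x+y\ge\alpha$, or $\alpha=x^{1/3}$, giving $x+y\ge x=\alpha^3\ge\alpha$; in either case, on $\{|X|\le\theta\alpha\}$ with $\theta<1$ the segment $[z,Z_1]$ stays inside $\{x>0\}$ and the jump of the extension is never probed. Second, when $y<0$ the exponential factor $e^{2y^3/9x}$ in (\ref{h-def}) drives $h$ and each of the partial derivatives listed in Lemma~\ref{derivatives} to zero faster than any power as $x\to 0^+$, so the zero-extension is in fact of class $C^3$ across $\{x=0,\,y<0\}$ and the Taylor expansion holds verbatim on any segment from the bulk event. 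Combining $|E_b|,|E_t|\le C\alpha^{-3/2-\delta}$ yields the claim.
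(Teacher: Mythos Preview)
Your proof is correct and follows essentially the same strategy as the paper: truncate at a level proportional to $\alpha$, bound the tail via $\mathbf{E}|X|^{2+\delta}$, Taylor-expand on the bulk to third order, cancel the leading contribution $yh_x+\tfrac12 h_{yy}$ by harmonicity, and control the surviving second-order terms, the truncation corrections, and the third-order remainder with the derivative bounds of Lemma~\ref{lem:bounds.h}. One small point on your discontinuity argument: the case split by the sign of $y$ is not quite watertight, since when $y<0$ but $\alpha=x^{1/3}$ the segment can cross into $\{y\ge 0\}$; however in that subcase your first argument already applies (because $x+y\ge x-|y|\ge\alpha^3-\alpha$ keeps the segment in $\{x>0\}$), which is exactly how the paper organises it --- split by whether $\alpha=|y|$ or $\alpha=x^{1/3}$ rather than by $\mathrm{sign}(y)$.
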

\begin{proof}
Let $A$ be a large constant. Then for $(x,y)$ such that 
$\alpha(x,y)\le A$ using the fact that function $h$ is bounded on any compact  we have $
|f(x,y)|\le C. 
$
In the rest of the proof we consider the case  $\alpha(x,y)>A$ where $A$ is sufficiently large. 

According to Lemma~\ref{lem:bounds.h}  function $h$ is at least $C^3$ smooth except the line $(x=0,y\ge 0)$. 
Then, for $t:|t|\le \frac{1}{2}\alpha(x,y)$, by the Taylor formula, 
\begin{align*}
&\Biggl|
h(x+y+t,y+t)-h(x,y)-\Bigl((y+t)h_x(x,y)+th_y(x,y)\\
&\hspace{0.5cm}
+\frac{1}{2}h_{xx}(x,y)(y+t)^2
+h_{xy}(x,y)(y+t)t
+\frac{1}{2}h_{yy}(x,y)t^2
\Bigr)
\Biggr|\\
&\hspace{0.5cm}
\le \sum_{i+j=3}\max_{\theta:|\theta|\le \frac{1}{2}\alpha(x,y)}
\left|\frac{\partial^{i+j}h(x+y+\theta,y+\theta)}{\partial x^i \partial y^j} (y+t)^it^j\right|:=r(x,y,t)
\end{align*}
To ensure that the Taylor formula is applicable we need to check that the set $\{(x+y+t,y+t):|t|\le \frac{1}{2}\alpha(x,y)\}$ 
is sufficiently far away from the half-line  $\{x=0,y>0\}$, where the derivatives of the function $h(x,y)$ are discontinuous. 
First, if $\alpha(x,y)=y$, then $|y+t|\ge \frac{1}{2}|y|$ for any $t:|t|\le \frac{1}{2}y$. Here, we use the fact that 
$\alpha(x,y)>A.$ Then $|y+t|\ge \frac{1}{2}A$ for any $t:|t|\le \frac{1}{2}y$. Second, 
if  $\alpha(x,y)=x^{1/3}$, then, $|x+y+t|\ge |x|-1.5|x|^{1/3}\ge 0.5A $ for sufficiently large $A$. 
This shows that the Taylor formula is valid.

Then, 
\begin{align*}
\left|  \mathbf Eh(x+y+X)-h(x,y)\right|
&\le 
\left| \mathbf E\left[h(x+y+X)-h(x);|X|> \frac{1}{2}\alpha(x,y)\right]\right|
\\
&\hspace{1cm}
+
\left|\mathbf E\left[h(x+y+X)-h(x,y);|X|\le \frac{1}{2}\alpha (x,y)\right]\right|.
\end{align*}
We can estimate the second term in the right-hand side using the Taylor formula above, 
\begin{align*}
&\left|\mathbf E\left[h(x+y+X)-h(x,y);|X|\le \frac{1}{2}\alpha(x,y)\right]\right|\\
&\hspace{0.5cm}\le 
\Biggl|\mathbf E \biggl[
(y+X)h_x(x,y)
+X h_y(x,y)
+\frac{1}{2}h_{xx}(x,y)(y+X)^2\\
&\hspace{1cm}
+h_{xy}(x,y)(y+X)X
+\frac{1}{2}h_{yy}(x,y)X^2
\biggr]\Biggr|\\
&\hspace{1cm}+\Biggl|\mathbf E \biggl[
(y+X)h_x(x,y)
+X h_y(x,y)
+\frac{1}{2}h_{xx}(x,y)(y+X)^2\\
&\hspace{1.5cm}
+h_{xy}(x,y)(y+X)X
+\frac{1}{2}h_{yy}(x,y)X^2;
|X|> \frac{1}{2}\alpha(x,y)
\biggr]\Biggr|\\
&\hspace{1cm}+\mathbf E\left[r(x,y,X);|X|\le \frac{1}{2}\alpha(x,y)\right]\\
&\hspace{0.5cm}:=E_1(x,y)+E_2(x,y)+E_3(x,y).
\end{align*}
First, we can simplify the first term $E_1(x,y)$  using the assumption $\mathbf EX=0,\mathbf EX^2=1$. Then,
\begin{align*}
  E_1(x,y)&=\left|yh_x(x,y)+\frac{1}{2}h_{yy}(x,y)+\frac{1}{2}h_{xx}(x,y)(y^2+1)+h_{xy}(x,y)\right|. 
\end{align*}
Using  the fact that $h(x,y)$ is harmonic (that is $yh_x+\frac{1}{2}h_{yy}=0$) we obtain
\begin{align*}
  E_1(x,y)&=\left|\frac{1}{2}h_{xx}(x,y)(y^2+1)+h_{xy}(x,y)\right|.
\end{align*}
Applying Lemma~\ref{lem:bounds.h} we obtain 
\begin{equation}
  \label{eq:e1}
  E_1(x,y)\le C \alpha(x,y)^{-5.5}\alpha(x,y)^2+C\alpha(x,y)^{-3.5}\le C\alpha(x,y)^{-3.5}.
\end{equation}
Second, using the fact that $\mathbf E|X|^{2+\delta}<\infty$ and the Chebyshev inequality we obtain 
\begin{align*}
  E_2(x,y)&\le C\mathbf E\biggl[
|X|\left(h_x(x,y)+h_x(x,y)\right)
+X^2\left(h_{xx}(x,y)+h_{xy}(x,y)+h_{yy}(x,y)\right)
;|X|>\frac{1}{2}\alpha(x,y)\biggr] \\
&\le C \frac{|h_x(x,y)|+|h_x(x,y)|}{\alpha(x,y)^{1+\delta}}
+C\frac{|h_{xx}(x,y)|+|h_{xy}(x,y)|+|h_{yy}(x,y)|}{\alpha(x,y)^\delta}.
\end{align*}
Applying Lemma~\ref{lem:bounds.h} we obtain  
\begin{equation}
  \label{eq:e2}
  E_2(x,y)\le C\alpha(x,y)^{-3/2-\delta}. 
\end{equation}
Third, applying Lemma~\ref{lem:bounds.h} once again, 
\begin{align*}
  E_3(x,y)&\le C
\max_{\theta:|\theta|\le \frac{1}{2}\alpha(x,y)}
|h_{xxx}(x+y+\theta,y+\theta)|\mathbf E\left[|y+X|^3;|X|\le \frac{1}{2}\alpha(x,y)\right]
\\
&\hspace{0.3cm}+ 
C\max_{\theta:|\theta|\le \frac{1}{2}\alpha(x,y)}
|h_{xxy}(x+y+\theta,y+\theta)|\mathbf E\left[|y+X|^2|X|;|X|\le \frac{1}{2}\alpha(x,y)\right]\\
&\hspace{0.3cm}+ 
C\max_{\theta:|\theta|\le \frac{1}{2}\alpha(x,y)}
|h_{xyy}(x+y+\theta,y+\theta)|\mathbf E\left[|y+X||X|^2;|X|\le \frac{1}{2}\alpha(x,y)\right]\\
&\hspace{0.3cm}+ 
C\max_{\theta:|\theta|\le \frac{1}{2}\alpha(x,y)}
|h_{yyy}(x+y+\theta,y+\theta)|\mathbf E\left[|X|^3|;|X|\le \frac{1}{2}\alpha(x,y)\right]\\
&\le C\alpha(x,y)^{-8.5}\alpha(x,y)\mathbf EX^2
+C\alpha(x,y)^{-8.5}\alpha(x,y)\mathbf EX^2\\
&\hspace{0.3cm}+C\alpha(x,y)^{-3.5}\alpha(x,y)\mathbf EX^2
+C\alpha(x,y)^{-2.5}\alpha(x,y)^{1-\delta}\mathbf E|X|^{2+\delta}\\
&\le C\alpha(x,y)^{-3/2-\delta}.
\end{align*}
We are left to estimate, 
\begin{align*}
&  \left| \mathbf E\left[h(x+y+X,y+X)-h(x,y);|X|> \frac{1}{2}\alpha(x,y)\right]\right|\\
&\hspace{0.5cm}\le 
C\mathbf E[ \alpha(|x+y+X|,|y+X|)^{0.5};|X|> \frac{1}{2}\alpha(x,y)]\\
&\hspace{1cm}+h(x,y)\mathbf P(|X|> \frac{1}{2}\alpha(x,y))\\
&\hspace{0.5cm}\le
C\mathbf E[|X|^{0.5};|X|> \frac{1}{2}\alpha(x,y)]+
C\alpha(x,y)^{0.5}\mathbf P(|X|> \frac{1}{2}\alpha(x,y))\\
&\hspace{0.5cm}\le C\alpha(x,y)^{-3/2-\delta}\mathbf E|X|^{2+\delta},
\end{align*}
where we applied the Chebyshev inequality in the last step 
and Lemma~\ref{lem:bounds.h} in the first step.
This proves the statement of the lemma.
\end{proof}
\begin{lemma}
\label{concentration}
There exists a constant $C$ such that
$$
\sup_{x,y}\mathbf{P}\left(|S_n^{(2)}-x|\leq1, |S_n-y|\leq1\right)\leq\frac{C}{n^2},\ n\geq1
$$
and
$$
\sup_{x}\mathbf{P}\left(|S_n^{(2)}-x|\leq1\right)\leq\frac{C}{n^{3/2}},\ n\geq1.
$$
\end{lemma}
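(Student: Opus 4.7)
The plan is to use a two-dimensional Esseen-type concentration inequality, which reduces the problem to bounding an integral of the absolute value of a characteristic function. Writing
\[
S_n-y=\sum_{j=1}^n X_j,\qquad S_n^{(2)}-x-ny=\sum_{j=1}^n (n-j+1)X_j,
\]
the joint characteristic function of this pair equals
\[
\psi_n(t_1,t_2)=\prod_{j=1}^n\phi(t_1+jt_2),
\]
with $\phi$ the characteristic function of $X$. The standard concentration bound then gives
\[
\sup_{a,b}\mathbf{P}\bigl(|S_n^{(2)}-a|\le1,\,|S_n-b|\le1\bigr)\le C\int_{[-1,1]^2}|\psi_n(t_1,t_2)|\,dt_1\,dt_2,
\]
and analogously $\sup_a\mathbf{P}(|S_n^{(2)}-a|\le1)\le C\int_{-1}^1\prod_{j=1}^n|\phi(jt)|\,dt$.

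Since $\mathbf{E}X=0$, $\mathbf{E}X^2=1$ and $X$ is non-degenerate, one can pick $\delta>0$ and $\rho<1$ so that $|\phi(s)|\le e^{-cs^2}$ for $|s|\le\delta$ and $|\phi(s)|\le\rho$ on $[\delta,1]$, after excising small neighborhoods of any lattice resonances of $|\phi|$. After the change of variables $r=nt_2$ (Jacobian $1/n$) the two-dimensional integral becomes
\[
\frac{1}{n}\int_{[-1,1]\times[-n,n]}\prod_{j=1}^n|\phi(t_1+jr/n)|\,dt_1\,dr.
\]
On the central region $\{|t_1|\le\delta/2,\,|r|\le\delta n/2\}$ every argument $|t_1+jr/n|$ lies below $\delta$, so the product is bounded by $\exp(-c\sum_{j=1}^n(t_1+jr/n)^2)$. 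A direct expansion gives
\[
\sum_{j=1}^n(t_1+jr/n)^2=nt_1^2+t_1r(n+1)+\frac{(n+1)(2n+1)}{6n}r^2,
\]
a positive-definite quadratic form whose eigenvalues are both of order $n$. Hence the central contribution is bounded by $\tfrac{1}{n}\int e^{-c'n(t_1^2+r^2)}\,dt_1\,dr=O(n^{-2})$, matching the target.

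For the tail regions, whenever $(t_1,r)$ leaves the central box, a linear-in-$n$ fraction of the values $t_1+jr/n$ fall outside the small neighborhoods where $|\phi|$ is close to one, so $|\psi_n|\le\rho^{cn}$ decays exponentially and contributes negligibly. The one-dimensional case is handled by the same substitution $u=nt$: on $|u|\le\delta$ one has $\prod_{j=1}^n|\phi(ju/n)|\le\exp(-c\sum(ju/n)^2)=\exp(-cu^2n/3)$, and the resulting Gaussian integral yields $(1/n)\int e^{-cu^2n/3}\,du=O(n^{-3/2})$. The main obstacle will be the careful treatment of the tail regions, in particular ruling out nontrivial contributions near lattice resonances of $|\phi|$; the heart of the argument, however, is the eigenvalue computation for $\sum_j(t_1+jt_2)^2$, which is precisely what produces the sharp rate $n^{-2}$.
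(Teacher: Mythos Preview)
Your approach is correct and takes a genuinely different route from the paper. The paper carries out no computation at all: it simply invokes Theorems~1.1 and~1.2 of Friedland and Sodin~\cite{FS07} with coefficient vectors $\vec a_k=(k,1)$, after observing that their arithmetic parameter $\alpha$ is at least of order $n$ for this choice. Your argument, by contrast, is self-contained: you bound the concentration function by the Esseen integral and then estimate $\int_{[-1,1]^2}\prod_j|\phi(t_1+jt_2)|\,dt_1\,dt_2$ directly. The eigenvalue computation for the quadratic form $\sum_j(t_1+jt_2)^2$ is exactly the mechanism producing the rate $n^{-2}$, and it is essentially the same quantity (the smallest singular value of the matrix with rows $\vec a_k$) that governs the Friedland--Sodin bound. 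So your proof makes transparent what the citation hides; the citation, in return, absorbs the entire tail analysis into a black box.

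One correction to your tail sketch: the claim that outside the central box a linear-in-$n$ fraction of the arguments $t_1+jr/n$ lie away from the resonances of $|\phi|$ is \emph{false} in the lattice case. If $|\phi|$ has period $s_0=2\pi/h\le1$ and $(t_1,t_2)$ is near a nonzero point of $s_0\mathbb Z^2$, then every argument $t_1+jt_2$ is near a resonance and $|\psi_n|$ is close to $1$. The fix is periodicity rather than counting: since $|\phi|$ is $s_0$-periodic, the integrand $\prod_j|\phi(t_1+jt_2)|$ is $s_0$-periodic in each of $t_1$ and $t_2$, so the integral over $[-1,1]^2$ is bounded by a fixed multiple of the integral over a single fundamental cell, where your Gaussian estimate handles the lone resonance and genuine exponential decay takes over elsewhere. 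With this adjustment (and the analogous, standard treatment of the non-lattice tail) the argument goes through.
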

\begin{proof}
In order to prove the first statement one has to apply Theorem 1.2 from Friedland and Sodin \cite{FS07}
with $\vec{a}_k=(k,1)$ and to note that $\alpha$ from that theorem is not smaller than
$cn$ for this special choice of vectors $\vec{a}_k$. The second inequality follows from Theorem 1.1 of the same paper.
\end{proof}

Let 
$$
K_{n,\varepsilon}=\{(x,y): y>0,x \ge n^{3/2-\varepsilon}\}.
$$

\begin{lemma}\label{lem0}
For any sufficiently small $\varepsilon>0$ there exists 
$\gamma>0$ such that for $k\le n$ the following inequalities hold
\begin{eqnarray}
\label{eq00}
 \mathbf E_{z}[h(Z_k);\tau>k]&\le\left(1+\frac{C}{n^{\gamma}}\right)h(z),\quad z\in K_{n,\varepsilon},\\
\label{eq01}
\mathbf E_{z}[h(Z_k);\tau>k] &\ge \left(1-\frac{C}{n^{\gamma}}\right)h(z),\quad z\in K_{n,\varepsilon}.
\end{eqnarray}
\end{lemma}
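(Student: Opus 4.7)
The plan is to combine the martingale identity from Lemma~\ref{lem.martingale} with optional stopping and the pointwise decay of the corrector $f$. Since $k\wedge\tau$ is a bounded stopping time and $\mathbf{E}_z|Y_k|<\infty$ (as $h$ grows only like $\sqrt{\alpha}$ while $|f|\le C$), optional stopping applied to the martingale $Y_n$ yields
\begin{equation*}
\mathbf{E}_z h(Z_{k\wedge\tau}) = h(z) + \mathbf{E}_z\sum_{j=0}^{k\wedge\tau-1} f(Z_j).
\end{equation*}
Since $h$ is extended by zero outside $\mathbb{R}_+\times\mathbb{R}$ while $Z_\tau$ lies in the complement, the left-hand side equals $\mathbf{E}_z[h(Z_k);\tau>k]$. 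Both estimates \eqref{eq00} and \eqref{eq01} therefore reduce to the single bound
\begin{equation*}
\Bigl|\mathbf{E}_z\sum_{j=0}^{k\wedge\tau-1} f(Z_j)\Bigr|\le \frac{C\,h(z)}{n^{\gamma}},\qquad z\in K_{n,\varepsilon},
\end{equation*}
which I attack by dominating it unconditionally by $\sum_{j=0}^{k-1}\mathbf{E}_z|f(Z_j)|$.

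The central ingredient is a uniform-in-$z$ estimate
\begin{equation*}
\mathbf{E}_z|f(Z_j)|\le C\,j^{-3/4-\delta/2},\qquad j\ge 1,
\end{equation*}
obtained by integrating the pointwise bound $|f|\le C\min(1,\alpha^{-3/2-\delta})$ from Lemma~\ref{lem:bound.f} against the joint concentration of $(S_j^{(2)},S_j)$. Covering the box $[-A^3,A^3]\times[-A,A]$ by $O(A^4)$ unit squares and applying the first inequality of Lemma~\ref{concentration} uniformly over the window center gives
\begin{equation*}
\mathbf{P}_z\bigl(\alpha(Z_j)<A\bigr) \le \mathbf{P}_z\bigl(|S_j^{(2)}|<A^3,\,|S_j|<A\bigr)\le \frac{C\,A^4}{j^2},\qquad A\ge 1.
\end{equation*}
A layer-cake/dyadic computation then splits the expectation into the contribution from $\alpha(Z_j)\le j^{1/2}$ (controlled by the above concentration bound) and $\alpha(Z_j)>j^{1/2}$ (controlled directly by the pointwise decay of $f$); both regimes produce the claimed $j^{-3/4-\delta/2}$ rate.

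Summing over $j=1,\ldots,k\le n$, and noting that the $j=0$ contribution $|f(z)|\le C\alpha(z)^{-3/2-\delta}$ is even smaller on $K_{n,\varepsilon}$, I obtain $\sum_{j=0}^{k-1}\mathbf{E}_z|f(Z_j)|\le C\,n^{1/4-\delta'/2}$ for some $\delta'>0$ (taking for instance $\delta'=\min(\delta,1/3)$ to avoid the borderline $\delta=1/2$ logarithm). On the other hand, for $z\in K_{n,\varepsilon}$ the lower bound in Lemma~\ref{lem:bounds.h} gives $h(z)\ge c\sqrt{\alpha(z)}\ge c\,x^{1/6}\ge c\,n^{1/4-\varepsilon/6}$. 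Dividing,
\begin{equation*}
\frac{1}{h(z)}\sum_{j=0}^{k-1}\mathbf{E}_z|f(Z_j)|\le C\,n^{-(\delta'/2-\varepsilon/6)},
\end{equation*}
so choosing $\varepsilon<3\delta'$ yields $\gamma=\delta'/2-\varepsilon/6>0$ as required.

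The principal obstacle is the uniform estimate $\mathbf{E}_z|f(Z_j)|\le Cj^{-3/4-\delta/2}$: one must stitch together the three regimes $\alpha(Z_j)\le 1$ (crude bound $|f|\le C$ paired with $O(1/j^2)$ concentration), $1\le\alpha(Z_j)\le j^{1/2}$ (dyadic decomposition using $\mathbf{P}(\alpha<A)\le CA^4/j^2$), and $\alpha(Z_j)>j^{1/2}$ (no useful concentration but full decay of $f$). Once this estimate is in hand, the comparison to $h(z)$ uses only the crude lower bound on $\alpha(z)$ from the definition of $K_{n,\varepsilon}$, so no additional probabilistic input is required.
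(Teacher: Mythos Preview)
Your proof is correct and follows the same overall strategy as the paper: the martingale identity for $Y_k$ reduces both inequalities to controlling $\sum_{l=0}^{k-1}\mathbf{E}_z|f(Z_l)|$, which is then bounded by $Cn^{1/4-\delta'/2}$ via concentration and compared against the lower bound $h(z)\ge c\,n^{1/4-\varepsilon/6}$ on $K_{n,\varepsilon}$.

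The only noteworthy difference is in how the concentration step is organised. The paper splits $\sum_l\mathbf{E}_z|f(Z_l)|$ into three pieces $\Sigma_1,\Sigma_2,\Sigma_3$ according to whether $|S_l^{(2)}|^{1/3}$ or $|S_l|$ realises $\alpha(Z_l)$, and then invokes both the one-dimensional and the two-dimensional bounds of Lemma~\ref{concentration} separately. Your argument instead packages everything into the single level-set estimate $\mathbf{P}_z(\alpha(Z_j)<A)\le CA^4/j^2$ (obtained by tiling $[-A^3,A^3]\times[-A,A]$ with unit squares and using only the two-dimensional bound) and runs one dyadic sum. This is a little cleaner and uses strictly less input, but it yields exactly the same per-term rate $\mathbf{E}_z|f(Z_j)|\le Cj^{-3/4-\delta/2}$ and hence the same final bound; the two executions are interchangeable.
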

\begin{proof}
  First, using \eqref{eq:defn.y} we obtain,
  \begin{align*}
    {\mathbf E}_{z} \left[h(Z_k);\tau>k\right]
    & = {\mathbf E}_{z}[Y_k;\tau>k]+\sum_{l=0}^{k-1}\mathbf E_z[f(Z_l);\tau>k]\\
    & ={\mathbf E}_{z}[Y_k] -{\mathbf E}_{z}[Y_k;\tau\le
    k]+\sum_{l=0}^{k-1}\mathbf E_{z}[f(Z_l);\tau>k].
  \end{align*}
  Since $Y_k$ is a martingale, $\mathbf E_z[Y_k]=\mathbf E_z[Y_0]=h(z)$ 
  and  
 $$
 \mathbf E_{z}[Y_k;\tau\le k]={\mathbf E}_{z}[Y_{\tau};\tau\le k].
 $$
  Using the definition of $Y_k$ once again we arrive at
  \begin{align}
    {\mathbf E}_{z} [h(Z_k);\tau>k]
    &= h(z)-\mathbf E_z [h(Z_\tau),\tau\leq k]\nonumber\\\nonumber
    &\hspace{1cm}+\mathbf E_{z} \left[\sum_{l=0}^{\tau-1}f(Z_l);\tau\le k\right]
    +\sum_{l=0}^{k-1}\mathbf E_{z}[f(Z_l);\tau>k]\\
\label{eq42}
&=h(z)+\mathbf E_{z} \left[\sum_{l=0}^{\tau-1}
      f(Z_l);\tau\le k\right] +\sum_{l=0}^{k-1}\mathbf
    E_{z}[f(Z_l);\tau>k],
  \end{align}
  since $h(Z_\tau)=0$. 

For $k\le n$, we can estimate 
\begin{equation}
\mathbf E_{z} \left[\sum_{l=0}^{\tau-1}f(Z_l);\tau\le k\right] 
                    +\sum_{l=0}^{k-1}\mathbf E_{z}[f(Z_l);\tau>k]
\le 
\sum_{l=0}^{n-1}\mathbf E_{z}[|f(Z_l)|].
\label{eq:sum.finite}
\end{equation}
We split the sum in
(\ref{eq:sum.finite}) in three parts,
\begin{align*}
  \sum_{l=0}^{n-1}\mathbf E_{z}[|f(Z_l)|]
  &=f(z)+\mathbf E_{z} \sum_{l=1}^{n-1} \left[ |f(Z_l)|;\max(|S_l^{(2)}|,|S_l|)\le 1\right]\\
  &\hspace{1cm}+\mathbf E_{z} \sum_{l=1}^{n-1} \left[ |f(Z_l)|;|S_l^{(2)}|^{1/3}>|S_l|,\max(|S_l^{(2)}|,|S_l|)>1\right]\\
  &\hspace{1cm}+\mathbf E_{z} \sum_{l=1}^{n-1} \left[ |f(Z_l)|;|S_l^{(2)}|^{1/3}\le |S_l|,\max(|S_l^{(2)}|,|S_l|)> 1\right]\\
  &=:f(z)+\Sigma_1+\Sigma_2+\Sigma_3.
\end{align*}
First, using the fact that $|f(x,y)|\le C$ for $|x|,|y|\le 1$ and Lemma \ref{concentration}, we obtain
\begin{align*}
  \Sigma_1&\le C\sum_{l=1}^\infty \mathbf P_z(|S_l^{(2)}|,|S_l|\le 1)
  \le C \sum_{l=1}^\infty l^{-2}<C.
  \end{align*}

Second, by Lemma~\ref{lem:bound.f},
\begin{align*}
  \Sigma_2&\le  C\sum_{l=1}^{n-1} \mathbf E_{z}\left[|S_l^{(2)}|^{-1/2-\delta/3}\right]\\
  &\le C \sum_{l=1}^{n-1}\sum_{j=1}^{\infty} \mathbf E_{z}\left[|S_l^{(2)}|^{-1/2-\delta/3};j\le |S_l^{(2)}|\le j+1\right] \\
  &\le C \sum_{l=1}^{n-1}\left(\sum_{j=1}^{l^{3/2}} j^{-1/2-\delta/3}\mathbf
  P_z(j\le |S_l^{(2)}|\le j+1)+l^{3/2(-1/2-\delta/3)}\mathbf P_{z}(|S_l^{(2)}|>l^{3/2})\right).
\end{align*}
Now we use the second concentration inequality from Lemma \ref{concentration} to get
an estimate
\begin{align*}
  \mathbf P_{z}(j\le |S_l^{(2)}|\le j+1)\le Cl^{-3/2}.
\end{align*}
Then,
\begin{align*}
  \Sigma_2&\le C \sum_{l=1}^{n-1}\left(l^{-3/2}\sum_{j=1}^{l^{3/2}} j^{-1/2-\delta/3}+l^{-3/4-\delta/2}\right)\\
  &\le C \sum_{l=1}^{n-1} l^{-3/4-\delta/2}\le Cn^{1/4-\delta/2}.
\end{align*}
Similarly,
\begin{align*}
  \Sigma_3&\le  C\sum_{l=1}^{n-1} \mathbf E_{z}\left[|S_l|^{-3/2-\delta};|Y(l)|\ge 1;|S_l^{(2)}|^{1/3}\leq|S_l|\right]\\
  &\le C \sum_{l=1}^{n-1}\sum_{j=1}^{\infty} \mathbf E_{z}\left[|S_l|^{-3/2-\delta};j\le |S_l|\le j+1;|S_l^{(2)}|\le (j+1)^3\right] \\
  &\le C \sum_{l=1}^{n-1}\left(\sum_{j=1}^{l^{1/2}} j^{-3/2-\delta}\mathbf
  P_z(j\le |S_l|\le j+1;|S_l^{(2)}|\le (j+1)^3)+l^{-3/4-\delta/2}\mathbf P_{z}(|S_l|>l^{1/2})\right).
\end{align*}
Using Lemma \ref{concentration} once again, we get
an estimate
\begin{align*}
  \mathbf P_{z}(j\le |S_l|\le j+1;|S_l^{(2)}|\le (j+1)^3)&\le 
C\sum_{i=1}^{(j+1)^3}\mathbf P_{z}(j\le |S_l|\le j+1;|S_l^{(2)}|\in (i,i+1))\\
&\le Cl^{-2}j^3.
\end{align*}
Then,
\begin{align*}
  \Sigma_3&\le C \sum_{l=1}^{n-1}\left(\sum_{j=1}^{l^{1/2}} j^{-3/2-\delta}l^{-2}j^3+l^{-3/4-\delta/2}\right)\\
  &\le C \sum_{l=1}^{n-1} \left(l^{-2}l^{5/4-\delta/2}+l^{-3/4-\delta/2}\right)\le Cn^{1/4-\delta/2}.
\end{align*}
Therefore,
\begin{align*}
  \sum_{l=0}^{n-1}\mathbf
    E_{z}[|f(Z_l)|]\le f(z)+ Cn^{1/4-\delta/2}.
\end{align*}
On the set $K_{n,\varepsilon}$ due to Lemma~\ref{lem:bound.f} and Lemma~\ref{lem:bounds.h},
$$
|f(z)|\le C\max(1,\alpha(z))^{-3/2-\delta}=C\le C\frac{h(z)}{\alpha(z)}
\le C\frac{h(z)}{n^{1/2-\varepsilon/3}} 
$$
We are left to note, see (\ref{h-bound}), that on the set $K_{n,\varepsilon}$
$$
h(z)\ge cn^{1/4-\varepsilon/2}.
$$
Hence, for $z\in K_{n,\varepsilon}$,
\begin{equation}\label{f-ineq}
\sum_{l=1}^{n-1}\mathbf
    E_{z}[|f(Z_l)|]\le Cn^{1/4-\delta/2}
\le C h(z)\frac{n^{1/4-\delta/2}}{n^{1/4-\varepsilon/2}}
\le Ch(z)n^{-\gamma}, 
\end{equation}
where $\gamma$ is positive   for sufficiently small $\varepsilon$. 

\end{proof}

Let 
$$
\nu_n:=\min\{k\ge 0: Z_k\in K_{n,\varepsilon}\}.
$$
\begin{lemma}
\label{lem1}
There exist a constant such that for 
$$
\sup_{z\in\mathbb{R}_+\times\mathbb{R}}\mathbf P_{z}(\nu_n\ge n^{1-\varepsilon},\tau>n^{1-\varepsilon})\le C\exp\{-n^{\varepsilon/4}\}.
$$
\end{lemma}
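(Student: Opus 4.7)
My plan is a multi-block iteration via the strong Markov property. Set $T := n^{1-\varepsilon}$, $\ell := \lceil n^{\varepsilon/4}\rceil$, and $m := \lfloor T/\ell\rfloor$, so that $[0,T]$ decomposes into $\ell$ consecutive sub-intervals of length $m$. Applying the Markov property at the times $0, m, 2m, \ldots, (\ell-1)m$, the event $\{\nu_n \ge T,\,\tau > T\}$ forces that in every block the chain neither leaves $\mathbb{R}_+\times\mathbb{R}$ nor hits $K_{n,\varepsilon}$. Hence
\[
\mathbf{P}_z(\nu_n \ge T,\,\tau > T) \le \Bigl(\sup_{z'\in (\mathbb{R}_+\times\mathbb{R})\setminus K_{n,\varepsilon}} q(z')\Bigr)^{\ell},\qquad q(z'):=\mathbf{P}_{z'}(\tau>m,\,\nu_n>m),
\]
and it suffices to show $q(z') \le 1-p$ for some fixed $p>0$ independent of $z'$ and $n$: then $(1-p)^{\ell}\le C\exp(-p\ell)\le C\exp(-n^{\varepsilon/4})$.

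To establish the uniform bound $q(z') \le 1-p$ I would perform a case analysis on $z' = (x',y')$. If $y'$ is very large and negative, the walk $S_k \approx y'$ remains strongly negative and $S_k^{(2)} \approx x' + k y'$ becomes non-positive within $O(x'/|y'|)$ steps, so $\mathbf{P}_{z'}(\tau\le m)\ge p$. If $y'$ is very positive (or $z'$ lies within one increment of $K_{n,\varepsilon}$), a single step has positive probability of landing in $K_{n,\varepsilon}$. In the intermediate regime where $\alpha(z')$ is on the scale of $m^{1/2}$ (i.e.\ $|y'|\lesssim \sqrt m$ and $x'\lesssim m^{3/2}$), the rescaled chain $(m^{-3/2}S^{(2)}_{[mt]},m^{-1/2}S_{[mt]})_{t\in[0,1]}$ converges by the functional CLT to the Kolmogorov diffusion started from the rescaled $z'$, and for that diffusion the probability of exiting $\mathbb{R}_+\times\mathbb{R}$ by time $1$ is strictly positive and lower semi-continuous in the starting point; this furnishes $p$ in the intermediate regime.

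The main obstacle is gluing these cases together so that $p$ is \emph{uniform} in $z'$, particularly in the awkward range where $x'$ is close to but below $n^{3/2-\varepsilon}$ (so exit on scale $m$ is unlikely) and $|y'|$ is modest (so the far threshold of $K_{n,\varepsilon}$ also seems unreachable). I would handle this range by a Gaussian-type lower bound for a single favourable deviation of $S_k$ over a window of $\Theta(m)$ steps, combined with the concentration estimates of Lemma \ref{concentration} and the moment hypothesis $\mathbf{E}|X|^{2+\delta}<\infty$, to force $S_k$ to keep a definite sign on the block with probability $\ge p$; depending on the sign, this produces either exit (if negative, using that $\sum S_j$ becomes sufficiently negative to overwhelm $x'$) or the hitting event (if positive, using that $x'$ combined with a sustained positive $S$ crosses $n^{3/2-\varepsilon}$). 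Patching together the ranges so that the constant $p$ does not deteriorate with $n$ is the technical heart of the argument; once this block-level bound is secured, iteration over the $\ell=\lceil n^{\varepsilon/4}\rceil$ blocks immediately produces the claimed $C\exp\{-n^{\varepsilon/4}\}$ estimate.
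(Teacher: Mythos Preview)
Your block decomposition is structurally the same as the paper's, but the per-block mechanism you propose is much more elaborate than what the paper actually does, and your choice of block length is fatal.

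The paper does not attempt any case analysis on $z'$ or invoke the functional CLT. It simply observes that on $\{\nu_n>n^{1-\varepsilon},\tau>n^{1-\varepsilon}\}$ the first coordinate $S^{(2)}_{jb_n}$ is confined to the interval $[0,n^{3/2-\varepsilon}]$ at every block boundary, and then applies the anti-concentration estimate of Lemma~\ref{concentration},
\[
Q_{b_n}(n^{3/2-\varepsilon})\;=\;\sup_x\mathbf P\bigl(S^{(2)}_{b_n}\in[x,x+n^{3/2-\varepsilon}]\bigr)\;\le\;\frac{Cn^{3/2-\varepsilon}}{b_n^{3/2}}.
\]
Choosing $b_n=A\,n^{1-2\varepsilon/3}$ makes this ratio $\le C/A^{3/2}\le 1/2$, and iteration over the $R_n$ blocks finishes. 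No separation into ``exit'' versus ``hitting'' cases is needed: only the $S^{(2)}$-coordinate is tracked.

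Your proposal has a genuine gap: you fixed the \emph{number} of blocks first ($\ell=\lceil n^{\varepsilon/4}\rceil$, to match the target exponent) and let the block length be $m\asymp n^{1-5\varepsilon/4}$. At this scale $S^{(2)}$ moves by only $O(m^{3/2})=O(n^{3/2-15\varepsilon/8})$, which is negligible compared with the width $n^{3/2-\varepsilon}$ of the strip. Take $z'=(\tfrac12 n^{3/2-\varepsilon},0)\in(\mathbb R_+\times\mathbb R)\setminus K_{n,\varepsilon}$: with probability $1-o(1)$ one has $S^{(2)}_k\in(\tfrac14 n^{3/2-\varepsilon},\tfrac34 n^{3/2-\varepsilon})$ for every $k\le m$, hence simultaneously $\tau>m$ and $\nu_n>m$, so $q(z')\to 1$. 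Your ``awkward regime'' patch cannot work either: a sustained negative $S$ contributes at most $m\cdot O(\sqrt m)=O(m^{3/2})\ll x'$ to the integrated walk, so it cannot force exit, and a sustained positive $S$ likewise cannot push $S^{(2)}$ across the $n^{3/2-\varepsilon}$ threshold. The correct order of reasoning is the paper's: choose the block length so that the fluctuation $b_n^{3/2}$ of $S^{(2)}$ matches the spatial scale $n^{3/2-\varepsilon}$, and let the number of blocks be whatever follows.
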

\begin{proof}
Fix some integer $A>0$ and put $b_n:=A[n^{1-2\varepsilon/3}].$ Define also $R_n:=[n^{1-\varepsilon}/b_n].$
It is clear that
$$
\mathbf{P}_z\left(\nu_n>n^{1-\varepsilon},\tau>n^{1-\varepsilon}\right)\leq
\mathbf{P}_z\left(S^{(2)}_{jb_n}\in[0,n^{3/2-\varepsilon}]\text{ for all } j\leq R_n\right).
$$
It follows from the definition of $S_n^{(2)}$ that
$$
S_{(j+1)b_n}^{(2)}=S_{jb_n}^{(2)}+b_nS_{jb_n}+\tilde{S}_{b_n}^{(2)},
$$
where $\tilde{S}_n^{(2)}$ is an independent copy of $S_n^{(2)}$. From this representation and 
the Markov property we conclude that
\begin{align*}
&\mathbf{P}\left(S^{(2)}_{jb_n}\in[0,n^{3/2-\varepsilon}]\text{ for all } j\leq R_n\right)\\
&\hspace{1cm}\leq
\mathbf{P}\left(S^{(2)}_{jb_n}\in[0,n^{3/2-\varepsilon}]\text{ for all } j\leq R_n-1\right)
Q_{b_n}\left(n^{3/2-\varepsilon}\right)\\
&\hspace{1cm}\leq\ldots\leq
\left(Q_{b_n}\left(n^{3/2-\varepsilon}\right)\right)^{R_n},
\end{align*}
where
$$
Q_n(\lambda):=\sup_{x\in\mathbb{R}}\mathbf{P}\left(S_n^{(2)}\in[x,x+\lambda]\right).
$$
Using the second inequality in Lemma \ref{concentration}, we get
$$
Q_{b_n}\left(n^{3/2-\varepsilon}\right)\leq\frac{C n^{3/2-\varepsilon}}{A^{3/2}(n^{1-2\varepsilon/3})^{3/2}}
=\frac{C}{A^{3/2}}.
$$
Choosing $A$ so large that $\frac{C}{A^{3/2}}\leq\frac{1}{2}$, we obtain
$$
\mathbf{P}\left(\nu_n>n^{1-\varepsilon},\tau_x>n^{1-\varepsilon}\right)\leq
\left(\frac{1}{2}\right)^{R_n}.
$$
Thus, the proof is finished.
\end{proof}
\begin{lemma}
\label{lem-1}
There exist a constant $C$ such that for 
$k\ge n^{1-\varepsilon}$,
$$
\mathbf E_{z}\left[h(Z_n),\nu_n\ge k,\tau>n^{1-\varepsilon}\right]
\le C(1+\alpha(z))^{1/2}\exp\{-n^{\varepsilon/8}\}.
$$
\end{lemma}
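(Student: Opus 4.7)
Set $m := \lceil n^{1-\varepsilon}\rceil$. The plan is to reduce the event to the one controlled by Lemma \ref{lem1}, cut the trajectory at time $m$ via the Markov property, and finish with Cauchy--Schwarz.

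First, since $k\ge n^{1-\varepsilon}$ we have $\{\nu_n\ge k\}\subset\{\nu_n\ge m\}$, and the event $\{\tau>n^{1-\varepsilon}\}$ contains $\{\tau>m\}$ (one picks the weaker side). So it suffices to estimate $\mathbf{E}_{z}[h(Z_n);\nu_n\ge m,\tau>m]$. Both $\{\nu_n\ge m\}$ and $\{\tau>m\}$ are $\mathcal{F}_m$-measurable (as $\nu_n$ and $\tau$ are stopping times), so the Markov property gives
$$
\mathbf{E}_{z}[h(Z_n);\nu_n\ge m,\tau>m]=\mathbf{E}_{z}[\phi(Z_m);\nu_n\ge m,\tau>m],\qquad \phi(z'):=\mathbf{E}_{z'}[h(Z_{n-m})].
$$

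Next I would control $\phi$ pointwise. By Lemma~\ref{lem:bounds.h}, $h(z')\le C\sqrt{\alpha(z')}\le C(|S^{(2)}_{n-m}|^{1/6}+|S_{n-m}|^{1/2})$, so Jensen's inequality and the elementary variance estimates
$$
\mathbf{E}_{z'}S_{n-m}^2\le C((y')^2+n),\qquad \mathbf{E}_{z'}(S^{(2)}_{n-m})^2\le C((x')^2+n^2(y')^2+n^3)
$$
(the second coming from $S^{(2)}_{n-m}-x'-(n-m)y'=\sum_{j=1}^{n-m}(n-m-j+1)X_j$) yield $\phi(z')\le C(\sqrt{\alpha(z')}+\sqrt{n})$. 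By exactly the same estimates applied at the starting point $z$, I get $\mathbf{E}_z\alpha(Z_m)\le C(\alpha(z)+m)\le C(\alpha(z)+n)$.

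The last step is Cauchy--Schwarz together with Lemma~\ref{lem1}. Writing $A:=\{\nu_n\ge m,\tau>m\}$, one has
\begin{align*}
\mathbf{E}_{z}[\phi(Z_m);A]
&\le C\,\mathbf{E}_{z}\!\left[\sqrt{\alpha(Z_m)};A\right]+C\sqrt{n}\,\mathbf{P}_{z}(A)\\
&\le C\,\bigl(\mathbf{E}_{z}\alpha(Z_m)\bigr)^{1/2}\mathbf{P}_{z}(A)^{1/2}+C\sqrt{n}\,\mathbf{P}_{z}(A)\\
&\le C\bigl(\sqrt{\alpha(z)}+\sqrt{n}\bigr)\exp\{-n^{\varepsilon/4}/2\}+C\sqrt{n}\exp\{-n^{\varepsilon/4}\}.
\end{align*}
Since the polynomial factors $\sqrt{n}$ are killed by the exponential (for large $n$ one can absorb them into $\exp\{-n^{\varepsilon/8}\}$ while keeping the $\varepsilon/8$ rate), this gives the desired bound $C(1+\alpha(z))^{1/2}\exp\{-n^{\varepsilon/8}\}$.

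The only delicate point is keeping the prefactor linear in $(1+\alpha(z))^{1/2}$ rather than in some polynomial of $n$; this is achieved because Jensen is applied at the level of $\alpha$ (not of $\sqrt\alpha$), so the starting-point contribution enters as $\sqrt{\alpha(z)}$, while all other $n$-dependent terms come with the genuinely exponential prefactor provided by Lemma~\ref{lem1}.
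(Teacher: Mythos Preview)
Your argument is correct in substance and uses the same two ingredients as the paper --- the moment bound $h^2\le C\alpha$ and Lemma~\ref{lem1} combined via Cauchy--Schwarz --- but it takes an unnecessary detour and contains a small slip.

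The slip: with $m=\lceil n^{1-\varepsilon}\rceil$ you have $\{\tau>m\}\subset\{\tau>n^{1-\varepsilon}\}$, so replacing the latter by the former goes the wrong way for an upper bound. The fix is trivial: simply keep the event $\{\tau>n^{1-\varepsilon}\}$, which is already $\mathcal{F}_m$-measurable, and everything else in your argument goes through unchanged.

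The detour: the Markov decomposition at time $m$ and the auxiliary function $\phi$ are not needed. The paper applies Cauchy--Schwarz directly to the full expectation,
\[
\mathbf{E}_z\bigl[h(Z_n);\nu_n\ge k,\tau>n^{1-\varepsilon}\bigr]
\le \bigl(\mathbf{E}_z h^2(Z_n)\bigr)^{1/2}\bigl(\mathbf{P}_z(\nu_n\ge n^{1-\varepsilon},\tau>n^{1-\varepsilon})\bigr)^{1/2},
\]
then uses $h^2(Z_n)\le C\alpha(Z_n)$ and the elementary bound $\mathbf{E}_z\alpha(Z_n)\le C(1+\alpha(z))n^{3/2}$ together with Lemma~\ref{lem1}. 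This is shorter, and the $z$-dependence enters in exactly the same way as in your argument, namely through $\mathbf{E}_z\alpha(Z_n)$.
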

\begin{proof}
Using the Cauchy-Schwartz inequality, we obtain
\begin{align*}
&\mathbf E_{z}\left[h(Z_n),\nu_n\ge k\right]\\
&\hspace{1cm}\leq
\left(\mathbf{E}_z\left[h^2(Z_n),\tau>n^{1-\varepsilon}\right]\right)^{1/2}
\left(\mathbf P_{z}(\nu_n\ge k,\tau>n^{1-\varepsilon})\right)^{1/2}\\
&\hspace{1cm}\leq
\left(\mathbf{E}_z\left[h^2(Z_n),\tau>n^{1-\varepsilon}\right]\right)^{1/2}
\left(\mathbf P_{z}(\nu_n\ge n^{1-\varepsilon},\tau>n^{1-\varepsilon})\right)^{1/2}.
\end{align*}
Recalling that $h(z)\leq C(\alpha(z))^{1/2}$ for all $z\in\mathbb{R}_+\times\mathbb{R}$, 
one can easy obtain the inequality
\begin{align*}
\mathbf{E}_z\left[h^2(Z_n),\tau>n^{1-\varepsilon}\right]
&\le C\mathbf{E}_z\left[\alpha(Z_n)\right]
\le \alpha(z)+\mathbf{E}_0\max(M_n^{1/3}n,M_n)\\
&\leq C(1+\alpha(z))n^{3/2},
\end{align*}
where $M_n=\max_{0\le i\le n}S_i$. 
Combining this with Lemma \ref{lem1}, we complete the proof.
\end{proof}

\begin{lemma}\label{lem3}
For any starting point $z$ there exists a limit 
\begin{equation}
\label{v.limit}
V_0(z)=\lim_{n\to \infty}\mathbf E_{z}\left[h(Z_n);\tau>n\right].
\end{equation}
Moreover, this limit is harmonic and strictly positive on $K_+$.
\end{lemma}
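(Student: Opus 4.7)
The plan is to show that $F_n(z):=\mathbf{E}_z[h(Z_n);\tau>n]$ is a Cauchy sequence in $n$; its limit will then be $V_0(z)$. The key mechanism is that for a starting point already in the good set $K_{N,\varepsilon}$, Lemma~\ref{lem0} forces $F_k\approx h$ uniformly for all $k\le N$, so $F_n$ is essentially constant over the whole window $[N^{1-\varepsilon},N]$; for a general starting point one reduces to this situation via the strong Markov property at $\nu_N$.

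Concretely, for $n\in[N^{1-\varepsilon},N]$ I would decompose
\[
F_n(z)=\mathbf{E}_z\bigl[F_{n-\nu_N}(Z_{\nu_N});\,\nu_N\le N^{1-\varepsilon},\,\tau>\nu_N\bigr]+R_{n,N}(z),
\]
where $R_{n,N}(z)$ collects the event $\{\nu_N>N^{1-\varepsilon}\}$. A Cauchy--Schwarz estimate of the type used to prove Lemma~\ref{lem-1}, combined with the bound $\mathbf{E}_z h^2(Z_n)\le C(1+\alpha(z))n^{3/2}$ and Lemma~\ref{lem1}, shows $R_{n,N}(z)\le C(1+\alpha(z))^{1/2}N^{3/4}\exp\{-N^{\varepsilon/8}\}$, which is negligible. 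On the surviving event one has $Z_{\nu_N}\in K_{N,\varepsilon}$ and $n-\nu_N\le N$, so Lemma~\ref{lem0} yields $F_{n-\nu_N}(Z_{\nu_N})=(1+O(N^{-\gamma}))h(Z_{\nu_N})$ uniformly on that event. Setting
\[
H_N(z):=\mathbf{E}_z[h(Z_{\nu_N});\,\nu_N\le N^{1-\varepsilon},\,\tau>\nu_N],
\]
we obtain $F_n(z)=(1+O(N^{-\gamma}))H_N(z)+O(\exp\{-N^{\varepsilon/8}\})$, and this expression is the same for every $n\in[N^{1-\varepsilon},N]$. Along $N_k=2^k$ the windows $[N_k^{1-\varepsilon},N_k]$ overlap for all large $k$, forcing $\{F_n(z)\}$ to be Cauchy.

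The delicate step will be to verify that $N^{-\gamma}H_N(z)\to 0$, which amounts to a uniform $N$-bound on $H_N(z)$ (equivalently, on $F_n(z)$). I would apply the optional stopping theorem to the martingale $Y_n$ from Lemma~\ref{lem.martingale} at the bounded time $T:=\nu_N\wedge\tau\wedge N^{1-\varepsilon}$, using $h(Z_\tau)=0$ and the exponential smallness of the $\{\nu_N>N^{1-\varepsilon}\}$-contribution, to write $H_N(z)$ as $h(z)$ plus a corrector sum $\mathbf{E}_z\sum_{k=0}^{T-1}f(Z_k)$; this sum is then controlled by re-running the estimates from the proof of Lemma~\ref{lem0} (this is where the moment condition $\mathbf{E}|X|^{2+\delta}<\infty$ is essential). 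Once existence and a uniform bound are in hand, positivity of $V_0$ on $K_+$ follows by constructing, for $z\in K_+$, an event of positive probability on which the chain enters $K_{N,\varepsilon}$ for some fixed large $N$ while $\tau$ remains large, which forces $F_n(z)\ge c(z)>0$ for all $n\ge N$. Finally, harmonicity is obtained from the one-step Markov identity $F_{n+1}(z)=\mathbf{E}_z[F_n(Z_1);\tau>1]$ by letting $n\to\infty$ and invoking dominated convergence, justified by the uniform bound just established.
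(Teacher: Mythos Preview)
Your strategy is essentially the paper's: decompose via the strong Markov property at $\nu_N$, apply Lemma~\ref{lem0} on the main piece and Lemmas~\ref{lem1}--\ref{lem-1} on the remainder, then iterate across scales to force the sequence to be Cauchy. The paper uses the super-exponential ladder $n_{m+1}^{\,1-\varepsilon}=n_m$ rather than your dyadic $N_k=2^k$, but this is cosmetic; both choices make the multiplicative error product $\prod(1+CN_k^{-\gamma})$ convergent.

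There is, however, a gap in your proposed route to the uniform bound on $H_N(z)$. Re-running the estimates of Lemma~\ref{lem0} for a \emph{general} starting point $z$ yields only
\[
\Bigl|\mathbf{E}_z\sum_{k=0}^{T-1}f(Z_k)\Bigr|
\le \sum_{l=0}^{N^{1-\varepsilon}-1}\mathbf{E}_z|f(Z_l)|
\le |f(z)|+C\,N^{(1-\varepsilon)(1/4-\delta/2)},
\]
which is \emph{not} bounded in $N$ when $\delta<1/2$. The last step of Lemma~\ref{lem0} --- dividing by $h(z)\ge c\,n^{1/4-\varepsilon/6}$ --- is only available when $z\in K_{n,\varepsilon}$, which your starting point is not. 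So the optional-stopping argument you outline does not give $\sup_N H_N(z)<\infty$ under the stated hypothesis $\mathbf{E}|X|^{2+\delta}<\infty$ with arbitrarily small $\delta>0$.

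The fix is already implicit in your own chaining. From $F_n=(1+O(N^{-\gamma}))H_N+O(e^{-cN^{\varepsilon/8}})$ and the overlap of consecutive windows you get the recursion
\[
H_{N_{k+1}}\le\frac{1+CN_k^{-\gamma}}{1-CN_{k+1}^{-\gamma}}\,H_{N_k}
+C(1+\alpha(z))^{1/2}e^{-cN_k^{\varepsilon/8}},
\]
and iterating (using $\sum_k N_k^{-\gamma}<\infty$) gives $\sup_k H_{N_k}(z)<\infty$ directly --- this is exactly how the paper obtains the bound (\ref{eq:sup}) as an immediate consequence of the iterated inequality (\ref{L3.2}), without any separate optional-stopping argument. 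Once this uniform bound is in place, your Cauchy argument, the dominated-convergence proof of harmonicity, and the positivity argument all go through as you describe.
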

\begin{proof}
Fix a large integer $n_0>0$ and 
put, for $m\ge 1$,
$$
n_m=[n_0^{(1-\varepsilon)^{m}}],
$$
where $[z]$ denotes the integer part of $z$. 
Let $n$ be any integer. It should belong to some interval $n\in (n_m,n_{m+1}]$. 
We first split the  expectation into 2 parts,
\begin{align*}
&\mathbf E_{z} [h(Z_n);\tau>n]=E_{1}(z)+E_{2}(z)\\
&\hspace{1cm}:=\mathbf E_z\left[h(Z_n);\tau>n,\nu_n\leq n_m\right]
+\mathbf E_z\left[h(Z_n);\tau>n,\nu_n> n_m\right].
\end{align*}
By Lemma~\ref{lem-1}, since $n_m\ge n^{1-\varepsilon}$,
the second term on the right hand side is bounded by
\begin{align*}
E_{2}(z)\le \mathbf E_z\left[h(Z_n);\tau>n,\nu_n> n_m\right] 
\le C(1+\alpha(z))^{1/2}\exp\{-Cn_m^{\varepsilon/8}\}.
\end{align*}
Then, 
\begin{align*}
 E_{1}(z)&\le\sum_{i=1}^{n_m}\int_{K_{n,\varepsilon}}\mathbf P_{z}\{\nu_n=i,\tau>i, S_i^{(2)}\in da,S_i\in db\}
\mathbf E_{(a,b)}[h(Z_{n-i});\tau>n-i].
\end{align*}
Then, by (\ref{eq00}),
\begin{eqnarray*}
E_{1}(z)\le \left(1+\frac{C}{n^\gamma}\right)
\sum_{i=1}^{n_m}
\int_{K_{n,\varepsilon}}\mathbf P_{z}\{\nu_n=i,\tau>i, S_i^{(2)}\in da,S_i\in db\}h(a,b).
\end{eqnarray*}
Now noting that $K_{n,\varepsilon}\subset K_{n_m,\varepsilon}$, we apply (\ref{eq01}) to obtain
\begin{align*}
E_{1}(z)&\le \frac{\left(1+\frac{C}{n^\gamma}\right)}{\left(1-\frac{C}{n_m^\gamma}\right)}
\sum_{i=1}^{n_m}
\int_{K_{n,\varepsilon}}\mathbf P_{z}\{\nu_n=i,\tau>i, S_i^{(2)}\in da,S_i\in db\}
\mathbf E_{(a,b)}[h(Z_{n_m-i});\tau>n_m-i]\\
&=\frac{\left(1+\frac{C}{n_m^\gamma}\right)}{\left(1-\frac{C}{n_m^\gamma}\right)}
\mathbf E_z[h(Z_{n_m});\tau>n_m,\nu_n\le n_m].
\end{align*}
As a result we have
\begin{align}\label{L3.1}
\mathbf E_{z} [h(Z_n);\tau>n]\leq
\frac{\left(1+\frac{C}{n_m^\gamma}\right)}{\left(1-\frac{C}{n_m^\gamma}\right)}
\mathbf E_{z}[h(Z_{n_m}); \tau>n_m]
+C(1+\alpha(z))^{1/2}\exp\{-Cn_m^{\varepsilon/8}\}.
\end{align}
Iterating this procedure $m$ times, we obtain
\begin{align}\label{L3.2}
\nonumber
&\mathbf E_{z} [h(Z_n);\tau>n]\leq
\prod_{j=0}^{m}\frac{\left(1+\frac{C}{n_m^{\gamma(1-\varepsilon)^j}}\right)}{\left(1-\frac{C}{n_m^{\gamma(1-\varepsilon)^j}}\right)}\times\\
&\hspace{0.1cm}\left(\mathbf E_{z}[h(Z_{n_0}); \tau>n_0]
+C(1+\alpha(z))^{1/2}\sum_{j=0}^{m}\exp\{-Cn_{m-j}^{\varepsilon/8}\}\right).
\end{align}
First of all we immediately obtain that  
\begin{equation}
\label{eq:sup}
\sup_n \mathbf E_{z} [h(Z_n);\tau>n]\le C(z)<\infty. 
\end{equation}

An identical procedure gives a lower bound 
\begin{align}\label{L3.3}
\nonumber
&\mathbf E_{z} [h(Z_n);\tau>n]\geq
\prod_{j=0}^{m}\frac{\left(1-\frac{C}{n_m^{\gamma(1-\varepsilon)^j}}\right)}{\left(1+\frac{C}{n_m^{\gamma(1-\varepsilon)^j}}\right)}\times\\
&\hspace{0.1cm}\left(\mathbf E_{z}[h(Z_{n_0}); \tau>n_0]
-C(1+\alpha(z))^{1/2}\sum_{j=0}^{m}\exp\{-Cn_{m-j}^{\varepsilon/8}\}\right).
\end{align}
For every positive $\delta$ we can choose $n_0$ such that
$$
\left|\prod_{j=0}^{m}\frac{\left(1-\frac{C}{n_m^{\gamma(1-\varepsilon)^j}}\right)}{\left(1+\frac{C}{n_m^{\gamma(1-\varepsilon)^j}}\right)}-1\right|\leq \delta\quad\text{and}\quad \sum_{j=0}^{m}\exp\{-Cn_{m-j}^{\varepsilon/8}\}\leq\delta.
$$
Then, for this value of $n_0$,
$$
\sup_{n>n_0}\mathbf E_{z} [h(Z_n);\tau>n]
\leq (1+\delta)\left(\mathbf E_{z} [h(Z_{n_0});\tau>n_0]+C(1+\alpha(z))^{1/2}\delta\right).
$$  
and 
$$
\inf_{n>n_0}\mathbf E_{z} [h(Z_n);\tau>n]
\geq (1-\delta)\left(\mathbf E_{z} [h(Z_{n_0});\tau>n_0]-C(1+\alpha(z))^{1/2}\delta\right). 
$$  
Consequently,
\begin{align*}
\sup_{n>n_0}\mathbf E_{z} [h(Z_n);\tau>n]
-\inf_{n>n_0}\mathbf E_{z} [h(Z_n);\tau>n]\\
\leq \delta
\mathbf E_{z} [h(Z_{n_0});\tau>n_0]+2C(1+\alpha(z))^{1/2}\delta.
\end{align*}
Taking into account (\ref{eq:sup}) and  that $\delta>0$ can be made arbitrarily small we arrive at the conclusion 
that  the limit in (\ref{v.limit}) exists. 

To prove harmonicity  of $V_0$ note 
that by the Markov property
$$
\mathbf E_z[h(Z_{n+1});\tau>n+1]=\int_{\mathbb{R}_+\times\mathbb{R}}\mathbf P(z+Z\in dz')\mathbf E_{z'}[h(Z_{n+1});\tau>n]
$$
Letting $n$ to infinity we obtain 
$$
V_0(z)=\mathbf E_z[V(Z_1);\tau>1].
$$
The existence of the limit in the right hand side is justified by the dominated convergence theorem and the above estimates 
for $\sup_{n>n_0}\mathbf E_{z} [h(Z_n);\tau>n]$.

Function $V_0$ has the following monotonicity property: if 
$x'\ge x$ and $y'\ge y$ then $V_0(x',y')\ge V_0(x,y)$. Indeed, 
first the function $h$ satisfies this property since $h_x\ge 0 , h_y\ge 0$, see Lemma~\ref{derivatives}. 
Second it clear that the exit time $\tau'\ge \tau$, where $\tau'$ is the exit of time the integrated random walk 
started from $(x',y')$ and $\tau$ is the exit of time the integrated random walk 
started from $(x,y)$. Third, 
\begin{align*}
\widetilde S_n&=y'+X_1+X_2+\ldots+X_n\ge y+X_1+X_2+\ldots+X_n=S_n , \\
\widetilde S_n^{(2)}&=x'+\widetilde S_1+\widetilde S_2+\ldots+\widetilde S_n\ge S_n^{(2)} 
\end{align*}
Therefore, for any $n$,
$$
\mathbf E_{(x',y')}[h(Z_n);\tau>n]\ge \mathbf E_{(x,y)}[h(Z_n);\tau>n]. 
$$
Letting $n$ to infinity we obtain $V_0(x',y')\ge V_0(x,y)$.

It remains to show that $V_0$ is strictly positive on $K_+$.
For every fixed $n_0$ we have $\mathbf E_{(x,y)} [h(Z_{n_0});\tau>n_0]\sim h(x,y)$ as $x,y\to\infty$.
Then, there exist $x_{n_0}, y_{n_0}$ such that
$$
\inf_{n>n_0}\mathbf E_{z} [h(Z_n);\tau>n]
\geq (1-\delta)^2\left(h(z)-C(1+\alpha(z))^{1/2}\delta\right). 
$$  
Taking into account (\ref{h-bound}),we conclude that $V_0(z)$ is positive for all $z$ with $x>x_{n_0}$, $y>y_{n_0}$.
{F}rom every starting point $z\in\mathbb{R}_+^2$ our process visits the set $x>x_{n_0}$, $y>y_{n_0}$ before $\tau$
with positive probability. Then, using the equation $V_0(z)=\mathbf{E}_z[V_0(Z_1),\tau>1]$, we conclude that $V_0(z)>0$.
The same argument shows that $V_0$ is strictly positive on $K_+$.
\end{proof}

\section{Asymptotics for $\tau$}\label{Sec.tau}
The proof of Theorem \ref{main} goes along the same line as the proofs of conditional limit theorems
in our earlier works \cite{DW10,DW11}. For that reason we give a proof of (\ref{T1.2}) only.
(This allows us also to demonstrate  all changes, which are needed for integrated random walks.)
\subsection{Coupling}
We start with some properties of the integrated Brownian motion.
\begin{lemma}
  \label{lem6}
  There exists a finite constant $C$ such that
  \begin{equation}\label{L6.1}
    \mathbf{P}_{(x,y)}(\tau^{bm}>t)\leq C\frac{h(x,y)}{t^{1/4}},\quad x,y>0.
  \end{equation}
  Moreover,
  \begin{equation}\label{L6.2}
    \mathbf{P}_{(x,y)}(\tau^{bm}>t)\sim \varkappa\frac{h(x,y)}{t^{1/4}},\quad \mbox{ as } t\to\infty,
  \end{equation}
  uniformly in $x,y>0$ satisfying $\max(x^{1/6},y^{1/2})\le \theta_tt^{1/4}$ with some
  $\theta_t\to 0$.  
\end{lemma}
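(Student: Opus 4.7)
The plan is to reduce both claims to properties of $\psi(x,y):=\mathbf{P}_{(x,y)}(\tau^{bm}>1)$ via Brownian scaling. The self-similarity of $B$ gives $\mathbf{P}_{(x,y)}(\tau^{bm}>t)=\psi(x/t^{3/2},y/t^{1/2})$, and the explicit formula \eqref{h-def} reveals the matching homogeneity $h(\lambda^3x,\lambda y)=\lambda^{1/2}h(x,y)$, so that $h(x/t^{3/2},y/t^{1/2})=h(x,y)/t^{1/4}$. Writing $\tilde z=(\tilde x,\tilde y):=(x/t^{3/2},y/t^{1/2})$, claim \eqref{L6.1} becomes $\psi(\tilde z)\le Ch(\tilde z)$ for all $\tilde x,\tilde y>0$, while the uniformity condition $\max(x^{1/6},y^{1/2})\le\theta_t t^{1/4}$ translates exactly into $\max(\tilde x^{1/6},\tilde y^{1/2})\le\theta_t$, so \eqref{L6.2} becomes the uniform limit $\psi(\tilde z)/h(\tilde z)\to\varkappa$ as $\tilde z\to 0$.

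The main tool for both is the harmonicity $\mathcal{D}h=0$ combined with $h\equiv 0$ on $\{x=0\}$, which makes $h$ evaluated along the killed Kolmogorov diffusion $Z^{bm}$ a nonnegative martingale and yields the identity
$$
h(\tilde z)=\mathbf{E}_{\tilde z}[h(Z^{bm}_1);\,\tau^{bm}>1]=\psi(\tilde z)\cdot\mathbf{E}_{\tilde z}[h(Z^{bm}_1)\mid\tau^{bm}>1].
$$
Both claims thus reduce to analysing $\Phi(\tilde z):=\mathbf{E}_{\tilde z}[h(Z^{bm}_1)\mid\tau^{bm}>1]$: \eqref{L6.1} asks for $\Phi(\tilde z)\ge c>0$ uniformly in $\tilde z$, and \eqref{L6.2} asks for $\Phi(\tilde z)\to 1/\varkappa$ uniformly as $\tilde z\to 0$.

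For the uniform convergence in \eqref{L6.2}, a scaling argument converts the conditional law of $Z^{bm}_1$ under $\mathbf{P}_{\tilde z}(\cdot\mid\tau^{bm}>1)$ into a rescaled version of the conditional law of $Z^{bm}_t$ under $\mathbf{P}_{(x,y)}(\cdot\mid\tau^{bm}>t)$ with $(x,y)$ varying along appropriate scaling paths and $t\to\infty$; the known Yaglom-type limit for the latter (density proportional to $h(u,v)g_1(0,0;u,v)$, itself a consequence of \eqref{asym.bm}) combined with the martingale identity then forces $\Phi\to 1/\varkappa$. For the lower bound $\Phi\ge c$ underlying \eqref{L6.1}, a more direct route is to introduce the hitting time $\sigma$ of a compact set $K\subset\mathbb{R}_+\times\mathbb{R}_+$ on which $h\ge c>0$ (which exists by \eqref{h-bound}): optional stopping of the $h$-martingale at $\sigma\wedge\tau^{bm}\wedge n$ followed by Fatou gives $h(\tilde z)\ge c\,\mathbf{P}_{\tilde z}(\sigma<\tau^{bm})$, reducing the desired estimate to $\psi(\tilde z)\le C\,\mathbf{P}_{\tilde z}(\sigma<\tau^{bm})$.

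The main obstacle in both parts is controlling the ``trapped'' event that the diffusion survives for a unit of time without ever reaching macroscopic scale; one needs this event to have probability $O(h(\tilde z))$, and in fact $o(h(\tilde z))$ for the uniform asymptotic. A natural route to the required non-trapping estimate is to observe that while $\alpha(Z^{bm}_s)\le 1$ the Brownian component $B$ stays bounded, so the near-constant velocity term drives $S^{(2)}$ into the boundary $\{x=0\}$ within bounded time except on an event of exponentially small probability.
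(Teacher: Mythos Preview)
Your reduction via scaling to the statements $\psi(\tilde z)\le Ch(\tilde z)$ and $\psi(\tilde z)/h(\tilde z)\to\varkappa$ as $\tilde z\to 0$ is correct, and the martingale identity $h(\tilde z)=\psi(\tilde z)\,\Phi(\tilde z)$ is valid. However, the paper takes a much shorter path that avoids the martingale apparatus entirely: it rescales the \emph{starting point} onto a compact arc by choosing $\lambda=x^{-1/3}$ when $x^{1/3}\ge y$ (so the starting point becomes $(1,s)$ with $s\in[0,1]$) and $\lambda=y^{-1}$ otherwise, and then applies the pointwise asymptotic \eqref{asym.bm} directly. Uniformity over the compact parameter range follows from monotonicity of $s\mapsto\mathbf P_{(1,s)}(\tau^{bm}>T)$ together with continuity of the limit $\varkappa h(1,\cdot)$ on $[0,1]$. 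For \eqref{L6.1} the paper then observes that once \eqref{L6.2} is known for small $\tilde z$, the remaining range $\alpha(\tilde z)\ge\varepsilon$ is trivial: there \eqref{h-bound} gives $h(\tilde z)\ge c>0$, so $\psi(\tilde z)\le 1\le C\,h(\tilde z)$.

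Your route can in principle be completed, but two steps are genuinely unfinished. First, passing from a Yaglom-type weak limit to $\Phi(\tilde z)\to\int h\,d\mu$ requires uniform integrability of $h(Z^{bm}_1)$ under the family of conditional laws $\mathbf P_{\tilde z}(\,\cdot\mid\tau^{bm}>1)$ as $\tilde z\to 0$; since $h$ is unbounded this is not automatic, and you do not address it. Second, your non-trapping sketch does not work as written: the constraint $\alpha(Z^{bm}_s)\le 1$ only forces $|Y_s|\le 1$, and a velocity component oscillating in $[-1,1]$ does not drive the position coordinate into $\{x=0\}$ in bounded time. What you actually need is that $\mathbf P_{\tilde z}(\tau^{bm}>1,\ \sigma>1)=o(h(\tilde z))$ as $\tilde z\to 0$, a polynomial-rate estimate in $\tilde z$ that your ``exponentially small'' remark does not supply. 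These gaps are fixable, but given that \eqref{asym.bm} is already available from \cite{GJW99}, the paper's compactness argument reaches both conclusions with essentially no additional work.
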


\begin{proof}
To prove this lemma we are going to use the scaling property of Brownian motion, which 
immediately gives for any $\lambda>0$, 
\begin{equation}
  \label{eq:scale}
  \mathbf{P}_{(x,y)}(\tau^{bm}>t)
=\mathbf{P}_{(\lambda^3 x,\lambda y)}(\tau^{bm}>t\lambda^2).
\end{equation}
We start with  (\ref{L6.2}). 
Consider first the case $x^{1/3}\ge y$. Putting $\lambda=x^{-1/3}$ in (\ref{eq:scale}) we obtain 
$$
  \mathbf{P}_{(x,y)}(\tau^{bm}>t)
=\mathbf{P}_{(1,yx^{-1/3})}(\tau^{bm}>tx^{-2/3}).
$$
In view of our assumption $tx^{-2/3}\ge \theta_t^{-1/4}\to\infty$.  We  use the continuity of $h(1,t)$  in $t\in[0,1]$ and immediately obtain that the asymptotics 
$$
\mathbf{P}_{(1,yx^{-1/3})}(\tau^{bm}>tx^{-2/3})\sim \varkappa \frac{h(1,yx^{-1/3})}{(tx^{-2/3})^{1/4}} 
$$
hold uniformly in $yx^{-1/3}\in [0,1]$. Then, 
$$
\mathbf{P}_{(x,y)}(\tau^{bm}>t)
\sim \varkappa \frac{h(1,yx^{-1/3})}{(tx^{-2/3})^{1/4}}
=\varkappa \frac{h(x,y)}{t^{1/4}}.
$$
If $x^{1/3}\le y$ then, choosing $\lambda=y^{-1}$ in (\ref{eq:scale}), we obtain 
$$
  \mathbf{P}_{(x,y)}(\tau^{bm}>t)
=\mathbf{P}_{(xy^{-3},1)}(\tau^{bm}>ty^{-2}).
$$
The rest of the proof goes exactly the same way.

To prove (\ref{L6.1}) first notice that the above proof showed  that for 
sufficiently small $\varepsilon>0$ and 
$t^{1/2}>\varepsilon^{-1}\max(x^{1/3},y)$ the bound (\ref{L6.1}) holds. Hence, it is 
sufficient to consider $t^{1/2}\le \varepsilon^{-1}\max(x^{1/3},y)$. 
Using the lower bound in (\ref{h-bound}), we see that
$$
\frac{h(x,y)}{t^{1/4}}\ge \frac{c \max(x^{1/6},y^{1/2})}{(\varepsilon^{-1}\max(x^{1/3},y))^{1/2}}
=c\varepsilon^2>0
$$
for $t^{1/2}\le \varepsilon^{-1}\max(x^{1/3},y)$,.
Therefore, 
$$
\mathbf P_{(x,y)}(\tau^{bm}>t)\le 1 \le \frac{1}{c\varepsilon^2}\frac{h(x,y)}{t^{1/4}}. 
$$
This proves (\ref{L6.1}). 
\end{proof}
We continue with the classical result (see, for example, \cite{M76}) on the quality of the normal approximation.
\begin{lemma}
\label{lem4}
If $\mathbf{E}|X|^{2+\delta}<\infty$ for some $\delta\in(0,1)$, then one can define a Brownian motion $B_t$ 
on the same probability space such that, for any $\gamma$ satisfying $0<\gamma<\frac{\delta}{2(2+\delta)}$,
\begin{equation}\label{L4}
\mathbf{P}\left(\sup_{u\leq n}|S_{[u]}-B_{u}|\geq n^{1/2-\gamma}\right)=o\left(n^{2\gamma+\gamma\delta-\delta/2}\right).
\end{equation}
\end{lemma}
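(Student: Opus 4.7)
The approach is to construct the coupling via a Skorokhod-type embedding and then turn the statement into a $(2+\delta)$-th moment bound on the coupling error, to which Chebyshev's inequality is applied.

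First I would realise the random walk inside a Brownian motion $B_t$: on an enlarged probability space, let $T_1,T_2,\ldots$ be i.i.d.\ stopping times with $\mathbf{E}T_1 = \mathbf{E}X_1^2 = 1$, satisfying $X_k \stackrel{d}{=} B_{T_1+\cdots+T_k} - B_{T_1+\cdots+T_{k-1}}$, so that $S_k = B_{\sigma_k}$ with $\sigma_k := T_1+\cdots+T_k$. The classical Skorokhod moment estimate (Davis, Sawyer) gives $\mathbf{E}|T_1|^{(2+\delta)/2} \le C\,\mathbf{E}|X_1|^{2+\delta} < \infty$. Applying the Marcinkiewicz--Zygmund inequality to the centred i.i.d.\ sequence $T_i - 1$, whose summands have moments of order $(2+\delta)/2 \in (1,3/2)$, then yields
\begin{equation*}
\mathbf{E}\left[\sup_{k\le n}|\sigma_k - k|^{(2+\delta)/2}\right] \le Cn.
\end{equation*}

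Second, I would transfer this clock-time bound into a bound on $|B_{\sigma_k} - B_k|$ using the $L^{2+\delta}$-modulus of continuity of Brownian motion on an interval of length $Cn$ over scale $n^{1-\eta}$ (the typical size of $\sigma_k - k$). Absorbing logarithmic factors into constants and adding the trivial $|B_u - B_{[u]}|$ correction to go from integer indices to $u \le n$, this gives
\begin{equation*}
\mathbf{E}\left[\sup_{u\le n}|S_{[u]} - B_u|^{2+\delta}\right] \le Cn.
\end{equation*}
Chebyshev's inequality then produces the claimed rate:
\begin{equation*}
\mathbf{P}\left(\sup_{u\le n}|S_{[u]} - B_u| \ge n^{1/2-\gamma}\right) \le C\,n^{\,1 - (2+\delta)(1/2 - \gamma)} = C\,n^{\,2\gamma + \gamma\delta - \delta/2}.
\end{equation*}

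The main obstacle is upgrading the $O$ produced by Chebyshev to the claimed $o$. This is precisely the point handled by Major's refinement \cite{M76}: one performs a dyadic decomposition of $[0,n]$ into blocks of intermediate size, truncates the increments $X_i$ at level roughly $n^{1/(2+\delta)}$, embeds the truncated block sums with sharper rates, and handles the (few) untruncated summands separately via their tails, exploiting $\mathbf{E}|X_1|^{2+\delta} < \infty$ to show the truncation error is $o$ of the corresponding power. Iterating this scheme across the dyadic scales converts the $O$-bound above into the stated $o$-bound.
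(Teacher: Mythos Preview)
The paper does not prove this lemma at all: it is stated as ``the classical result (see, for example, [M76])'' and simply quoted. So there is no argument in the paper to compare against; your sketch already goes well beyond what the authors provide.

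As a sketch of the ideas behind the Major-type strong approximation, your outline is broadly on target: Skorokhod embedding, a moment bound on the clock discrepancy $\sigma_k-k$ of order $(2+\delta)/2$, transfer through the modulus of continuity of $B$, and Chebyshev give exactly the exponent $2\gamma+\gamma\delta-\delta/2$ with an $O$, and you correctly flag that the passage from $O$ to $o$ is the nontrivial content of \cite{M76}. Two remarks if you want to tighten it. First, the step ``$\mathbf{E}[\sup_{u\le n}|S_{[u]}-B_u|^{2+\delta}]\le Cn$'' is not really what one proves; the transfer from $|\sigma_k-k|$ to $|B_{\sigma_k}-B_k|$ via modulus of continuity is naturally done in probability (splitting on $\{\max_k|\sigma_k-k|\le n^{1-\eta}\}$ and using Levy's modulus on that event) rather than in $L^{2+\delta}$, and the log factors you allude to do not in fact disappear for free --- they are eliminated by the same block/truncation scheme you invoke for the $o$. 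Second, your description of Major's refinement (truncation at level $n^{1/(2+\delta)}$, dyadic blocks, separate handling of large increments) is accurate in spirit; that is indeed where both the $o$ and the removal of logarithmic losses come from. Since the paper treats the lemma as a citation, your write-up is acceptable as is, but if you want it to stand alone you should either carry out the block argument or, as the authors do, simply cite \cite{M76}.
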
 
\begin{lemma}\label{lem5}
  For all sufficiently small $\varepsilon>0$,
  \begin{equation}\label{L6.4}
    \mathbf{P}_z(\tau>n)=\varkappa h(z)n^{-1/4}(1+o(1)),\quad\text{as }n\to\infty
  \end{equation}
  uniformly in $z\in K_{n,\varepsilon}$ such that $\max\{x^{1/3},y\}\le \theta_n
  \sqrt n$ for some $\theta_n\to 0$. Moreover, there exists a constant
  $C$ such that
  \begin{equation}\label{L6.5}
    \mathbf{P}_z(\tau>n)\le C \frac{h(z)}{n^{1/4}},
  \end{equation}
  uniformly in $z\in K_{n,\varepsilon},n\ge 1$. 
\end{lemma}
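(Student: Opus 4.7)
\textbf{Plan for Lemma \ref{lem5}.} The strategy is to transfer the Brownian asymptotics (Lemma~\ref{lem6}) to the random walk via the KMT coupling of Lemma~\ref{lem4}. On the good coupling event $G_n := \{\sup_{u\le n}|S_{[u]}-B_u|\le n^{1/2-\gamma}\}$, summation of the walk-versus-BM discrepancy along $k\le n$ yields
$$\left|S_k^{(2)} - \Bigl(x + ky + \int_0^k B_s\,ds\Bigr)\right| \le Cn^{3/2-\gamma} \quad\text{for all } k\le n,$$
so that the discrete and continuous problems differ only by a shift of order $n^{3/2-\gamma}$ in the $x$-coordinate. By Lemma~\ref{lem4}, $\mathbf{P}(G_n^c) = o(n^{-\delta/2+2\gamma+\gamma\delta})$, which will be $o(n^{-1/4-\eta})$ for some $\eta>0$ provided $\gamma$ is chosen small enough relative to $\delta$.

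For the asymptotic (\ref{L6.4}), I would sandwich $\tau$ between Brownian exit times. For the upper bound, on $G_n$ the event $\{\tau>n\}$ forces the integrated Brownian path started at $z_+ := (x+Cn^{3/2-\gamma},y)$ to remain positive at all integer times, and using a concentration estimate for the oscillation of $x+tS+\int_0^t B_s ds$ on unit intervals, also to remain positive throughout $[0,n]$. Lemma~\ref{lem6} then gives
$$\mathbf{P}_z(\tau>n) \le \mathbf{P}_{z_+}(\tau^{bm}>n) + \mathbf{P}(G_n^c) \sim \varkappa\,h(z_+)/n^{1/4}.$$
The restriction $\max\{x^{1/3},y\}\le \theta_n\sqrt n$ guarantees that $z_+$ still satisfies the hypothesis of Lemma~\ref{lem6} and, via the derivative bounds in Lemma~\ref{lem:bounds.h}, that $h(z_+) = h(z)(1+o(1))$. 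The lower bound is symmetric: start the integrated Brownian motion at $z_- := (x-Cn^{3/2-\gamma},y)$ and restrict to the event $\{\tau^{bm}_{z_-}>n\}$ with an additional margin ensuring the path stays above $2Cn^{3/2-\gamma}$; on this event intersected with $G_n$ one has $\tau>n$, and the margin costs only $o(1)\cdot h(z)/n^{1/4}$ by the explicit form of $h$ and Lemma~\ref{lem6} applied with a slightly shrunken window.

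For the uniform upper bound (\ref{L6.5}), I would split $K_{n,\varepsilon}$ into a ``moderate'' region where $\max\{x^{1/3},y\}\le\theta_n\sqrt n$ and a ``large'' region where $\max\{x^{1/3},y\}>\theta_n\sqrt n$. On the moderate region the argument above gives a bound of the form $Ch(z)/n^{1/4}$. On the large region we use the trivial bound $\mathbf{P}_z(\tau>n)\le 1$ together with the lower estimate $h(z)\ge c\alpha(z)^{1/2}$ from Lemma~\ref{lem:bounds.h}, which gives $h(z)/n^{1/4}\ge c\theta_n^{1/2}$, whence $1\le Ch(z)/n^{1/4}$ for some constant. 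This mirrors the second half of the proof of Lemma~\ref{lem6}.

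\textbf{Main obstacle.} The delicate step is the lower bound, where the coupling only controls $S_k^{(2)}$ at integer times while $\tau^{bm}$ sees the whole trajectory. To close the gap one must verify that adding a uniform margin of order $n^{3/2-\gamma}$ around the boundary changes the Brownian exit probability by only a multiplicative $1+o(1)$ factor; this rests on the explicit density $g_t$ and the scaling properties of the Kolmogorov diffusion used in Lemma~\ref{lem6}. A secondary calibration issue is choosing $\gamma$ small relative to the permitted $\varepsilon$ so that simultaneously (i) $n^{3/2-\gamma}\ll x\ge n^{3/2-\varepsilon}$, so that the shift is negligible relative to the $x$-coordinate, (ii) $n^{3/2-\gamma}/n^{3/2}\ll\theta_n$, so the shift is a small perturbation on the scaling window, and (iii) $\mathbf{P}(G_n^c)=o(h(z)/n^{1/4})$.
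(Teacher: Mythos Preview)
Your treatment of (\ref{L6.4}) is essentially the paper's: couple via Lemma~\ref{lem4}, sandwich $\mathbf{P}_z(\tau>n)$ between $\mathbf{P}_{z^\pm}(\tau^{bm}>n)$ with $z^\pm=(x\pm Cn^{3/2-\gamma},y)$, apply Lemma~\ref{lem6}, and use Lemma~\ref{lem:bounds.h} to get $h(z^\pm)=h(z)(1+o(1))$. The paper does not separately discuss the integer-time versus continuous-time discrepancy you flag; in fact on the coupling event one has $\bigl|x+\int_0^t S_{[u]}\,du - W_t\bigr|\le n^{3/2-\gamma}$, and since $x+\int_0^t S_{[u]}\,du$ is the piecewise-linear interpolant of the values $S^{(2)}_{k-1}+y$, positivity of all $S^{(2)}_k$ already forces this interpolant---and hence $W_t+n^{3/2-\gamma}$---to stay positive, so no extra oscillation estimate is needed.

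Your argument for (\ref{L6.5}), however, has a gap. In your ``large'' region $\alpha(z)>\theta_n\sqrt n$ you obtain $h(z)/n^{1/4}\ge c\theta_n^{1/2}$, but since $\theta_n\to0$ the constant required in ``$1\le Ch(z)/n^{1/4}$'' is at least $(c\theta_n^{1/2})^{-1}\to\infty$, so it is not uniform in $n$. The analogy with Lemma~\ref{lem6} fails precisely because there the cutoff is a \emph{fixed} $\varepsilon$, not a vanishing sequence. The paper avoids any case split: the coupling upper bound (\ref{L5.1}) is valid for \emph{every} $z\in K_{n,\varepsilon}$ (the inclusion $\{\tau>n\}\cap A_n\subset\{\tau^{bm}_{z^+}>n\}$ needs no restriction on $y$), and substituting (\ref{L6.1}) gives $\mathbf{P}_z(\tau>n)\le Ch(z^+)/n^{1/4}+o(n^{-r})$. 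With $\gamma>\varepsilon$ one has $x+n^{3/2-\gamma}\le 2x$, hence $\alpha(z^+)\le C\alpha(z)$ and $h(z^+)\le Ch(z)$; the lower bound $h(z)\ge cn^{1/4-\varepsilon/6}$ on $K_{n,\varepsilon}$ then absorbs the $o(n^{-r})$ term once $r>\varepsilon/6$. Your split would be repaired by cutting at a fixed level $A\sqrt n$ rather than $\theta_n\sqrt n$, but the direct route is cleaner.
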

\begin{proof}
  For every $z=(x,y)\in K_{n,\varepsilon}$ denote
$$
z^\pm=(x\pm n^{3/2-\gamma}, y).
$$
Note also that if we take
$\gamma>\varepsilon$, then $y^{\pm}\in K_{n,\varepsilon'}$ for any $\varepsilon'>\varepsilon$ and sufficiently large $n$.

Define
$$
A_n=\left\{\sup_{u\leq n}|S_{[u]}-B_u|\le n^{1/2-\gamma}\right\},
$$
where $B$ is the Brownian motion constructed in Lemma~\ref{lem4}.
Then, using (\ref{L4}), we obtain
\begin{align}\label{L5.1}
  \nonumber
  \mathbf{P}_z(\tau>n)&=\mathbf{P}_z(\tau>n,A_n)+o\left(n^{-r}\right)\\
  \nonumber
  &\leq \mathbf{P}_{z^+}(\tau^{bm}>n,A_n)+o\left(n^{-r}\right)\\
  &=\mathbf{P}_{z^+}(\tau^{bm}>n)+o\left(n^{-r}\right),
\end{align}
where $r=r(\delta,\varepsilon)=\delta/2-2\gamma-\gamma\delta.$ In the
same way one can get
\begin{equation}
  \label{L5.2}
  \mathbf{P}_{z^-}(\tau^{bm}>n)\leq \mathbf{P}_z(\tau>n)+o\left(n^{-r}\right).
\end{equation}
By Lemma~\ref{lem6},
$$
\mathbf P_{z^\pm}(\tau^{bm}>n)\sim \varkappa h(z^\pm)n^{-1/4}.
$$
It follows from the Taylor formula and Lemma \ref{lem:bounds.h} that
\begin{equation}
  \label{lbounda}
|h(z^\pm)-h(z)|\leq Cn^{3/2-\gamma}\left(\alpha(x\pm n^{3/2-\gamma},y)\right)^{-5/2}
\leq Cn^{1/4+5\varepsilon/6-\gamma}.
\end{equation}
Furthermore, in view of (\ref{h-bound}),
\begin{equation}\label{L5.3a}
h(z)>cn^{1/4-\varepsilon/6},\quad z\in K_{n,\varepsilon}.
\end{equation}
{F}rom this bound and (\ref{lbounda}) we infer that
$$
h(z^\pm)=h(z)(1+o(1)),\quad z\in K_{n,\varepsilon}.
$$ 
Therefore, we have
$$
\mathbf{P}_{z^{\pm}}(\tau^{bm}>n)=\varkappa h(z)n^{-1/4}(1+o(1)).
$$
{F}rom this relation and bounds (\ref{L5.1}) and (\ref{L5.2}) we
obtain
$$
\mathbf{P}_z(\tau>n)=\varkappa
h(z)n^{-1/4}(1+o(1))+o\left(n^{-r}\right).
$$
Using (\ref{L5.3a}), we see that $n^{-r}=o(h(z)n^{-1/4})$ for all
$\varepsilon$ satisfying $r=\delta/2-2\gamma-2\gamma\delta>\varepsilon/6.$
This proves (\ref{L6.4}).  To prove (\ref{L6.5}) it is sufficient to
substitute (\ref{L6.1}) in (\ref{L5.1}).
\end{proof}
\subsection{Asymptotic behaviour of $\tau$}
Applying Lemma~\ref{lem1}, we obtain
\begin{align}\label{T1.10}
  \nonumber
  \mathbf{P}_z(\tau>n)&=\mathbf{P}_z(\tau>n,\nu_n\leq n^{1-\varepsilon})+\mathbf{P}_z(\tau>n,\nu_n> n^{1-\varepsilon})\\
  &=\mathbf{P}_z(\tau>n,\nu_n\leq
  n^{1-\varepsilon})+O\left(e^{-n^{\varepsilon/4}}\right).
\end{align}
Using the strong Markov property, we get for the first term the
following estimates
\begin{align}
  \label{T1.20}
  \nonumber &\int_{K_{n,\varepsilon}}\mathbf{P}_z\left(Z_{\nu_n}\in
    d\tilde{z},\tau>\nu_n,\nu_n\leq
    n^{1-\varepsilon}\right)\mathbf{P}_{\tilde{z}}(\tau>n)
  \leq\mathbf{P}_z(\tau>n,\nu_n\leq n^{1-\varepsilon})\\
  &\hspace{1cm}\leq\int_{K_{n,\varepsilon}}\mathbf{P}_z\left(Z_{\nu_n}\in
    d\tilde{z},\tau>\nu_n,\nu_n\leq
    n^{1-\varepsilon}\right)\mathbf{P}_{\tilde{z}}(\tau>n-n^{1-\varepsilon}).
\end{align}
Applying now Lemma~\ref{lem5}, we obtain
\begin{align}\label{T1.30}
  \nonumber
  &\mathbf{P}_z(\tau>n;\nu_n\leq n^{1-\varepsilon})\\
  \nonumber &=
  \frac{\varkappa+o(1)}{n^{1/4}}\mathbf{E}_z\left[h(Z_{\nu_n});\tau>\nu_n,|M_{\nu_n}|\leq \theta_n\sqrt{n},\nu_n\leq n^{1-\varepsilon}\right]\\
  \nonumber
  &\hspace{0.5cm}+O\left(\frac{1}{n^{1/4}}\mathbf{E}_z\left[|h(Z_{\nu_n});\tau>\nu_n,|M_{\nu_n}|> \theta_n\sqrt{n},\nu_n\leq n^{1-\varepsilon}\right]\right)\\
  \nonumber &=
  \frac{\varkappa+o(1)}{n^{1/4}}\mathbf{E}_z\left[h(Z_{\nu_n});\tau>\nu_n,\nu_n\leq n^{1-\varepsilon}\right]\\
  &\hspace{0.5cm}+O\left(\frac{1}{n^{1/4}}\mathbf{E}_z\left[h(Z_{\nu_n});\tau_x>\nu_n,|M_{\nu_n}|>
      \theta_n\sqrt{n},\nu_n\leq n^{1-\varepsilon}\right]\right),
\end{align}
where $M_k:=\max_{j\leq k}|S_j|$.

We now show that the first expectation converges to $V_0(z)$ and that
the second expectation is negligibly small.
\begin{lemma}\label{lem11}
  Under the assumptions of Theorem~\ref{main},
$$
\lim_{n\to\infty}\mathbf{E}_z\left[h(Z_{\nu_n});\tau>\nu_n,\nu_n\leq n^{1-\varepsilon}\right]=V_0(z).
$$
\end{lemma}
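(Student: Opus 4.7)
The plan is to compare $\mathbf{E}_z[h(Z_{\nu_n});\tau>\nu_n,\nu_n\le n^{1-\varepsilon}]$ directly with $\mathbf{E}_z[h(Z_n);\tau>n]$, whose limit has already been identified with $V_0(z)$ in Lemma~\ref{lem3}. The bridge between the two is the strong Markov property at the stopping time $\nu_n$ combined with the two-sided sandwich of Lemma~\ref{lem0}, while the atypical event $\{\nu_n>n^{1-\varepsilon}\}$ is removed using Lemma~\ref{lem-1}.

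Concretely, the strong Markov property at $\nu_n$ gives
\begin{equation*}
\mathbf{E}_z[h(Z_n);\tau>n,\nu_n\le n^{1-\varepsilon}]
=\mathbf{E}_z\Bigl[\mathbf{1}_{\{\tau>\nu_n,\,\nu_n\le n^{1-\varepsilon}\}}\,
\mathbf{E}_{Z_{\nu_n}}\bigl[h(Z_{n-\nu_n});\tau>n-\nu_n\bigr]\Bigr].
\end{equation*}
By the very definition of $\nu_n$ we have $Z_{\nu_n}\in K_{n,\varepsilon}$, and $k:=n-\nu_n\le n$, so Lemma~\ref{lem0} applies to the inner expectation and sandwiches it between $(1\pm Cn^{-\gamma})h(Z_{\nu_n})$. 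Taking expectations,
\begin{equation*}
\mathbf{E}_z[h(Z_n);\tau>n,\nu_n\le n^{1-\varepsilon}]
=\bigl(1+O(n^{-\gamma})\bigr)\,\mathbf{E}_z[h(Z_{\nu_n});\tau>\nu_n,\nu_n\le n^{1-\varepsilon}].
\end{equation*}

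Next I would use Lemma~\ref{lem-1} (with $k=\lceil n^{1-\varepsilon}\rceil$) to discard the complementary event:
$\mathbf{E}_z[h(Z_n);\tau>n,\nu_n> n^{1-\varepsilon}]\le C(1+\alpha(z))^{1/2}e^{-n^{\varepsilon/8}}$, whence
\begin{equation*}
\mathbf{E}_z[h(Z_n);\tau>n,\nu_n\le n^{1-\varepsilon}]
=\mathbf{E}_z[h(Z_n);\tau>n]+O\bigl(e^{-n^{\varepsilon/8}}\bigr).
\end{equation*}
Combining the last two displays with Lemma~\ref{lem3}, which states that $\mathbf{E}_z[h(Z_n);\tau>n]\to V_0(z)$, yields
$\mathbf{E}_z[h(Z_{\nu_n});\tau>\nu_n,\nu_n\le n^{1-\varepsilon}]\to V_0(z)$, which is the claim.

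No genuinely new obstacle appears here: the statement is a short assembly of results already established in Section~\ref{sect.V.is.good}. The only item that must be verified is that the parameters entering Lemma~\ref{lem0} after the Markov split indeed satisfy its hypotheses, namely $Z_{\nu_n}\in K_{n,\varepsilon}$ and $n-\nu_n\le n$; both are automatic from the definitions of $\nu_n$ and of $K_{n,\varepsilon}$.
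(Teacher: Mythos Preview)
Your proof is correct and follows essentially the same strategy as the paper: split off the event $\{\nu_n>n^{1-\varepsilon}\}$ via Lemma~\ref{lem-1}, and on the main part use the strong Markov property at $\nu_n$ together with the sandwich bound to pass from $h(Z_{\nu_n})$ to the quantity in Lemma~\ref{lem3}. The only organisational difference is that the paper re-runs the martingale/optional-stopping argument for $Y_k$ and invokes the bound~(\ref{f-ineq}) directly (comparing with $\mathbf{E}_z[h(Z_{n^{1-\varepsilon}});\tau>n^{1-\varepsilon}]$), whereas you simply cite Lemma~\ref{lem0} as a black box (comparing with $\mathbf{E}_z[h(Z_n);\tau>n]$); since Lemma~\ref{lem0} is proved by exactly that martingale argument, the two routes are equivalent and yours is slightly more economical.
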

\begin{proof}
Put $T=\tau\wedge n^{1-\varepsilon}$. Since $Y_k$ is a martingale,
$$
\mathbf{E}_z[Y_T]=\mathbf{E}_z[Y_{\nu_n\wedge T}]=\mathbf{E}_z[Y_{\nu_n},\nu_n<T]+\mathbf{E}_z[Y_{T},\nu_n\geq T]
$$
and, consequently,
$$
\mathbf{E}_z[Y_{\nu_n},\nu_n<T]=\mathbf{E}_z[Y_{T},\nu_n<T].
$$
Using the definition of $Y_k$, we have
\begin{align*}
\mathbf{E}_z[h(Z_{\nu_n}),\nu_n<T]=\mathbf{E}_z[h(Z_{T}),\nu_n<T]
-\mathbf{E}_z\left[\sum_{k=\nu_n}^{T-1}f(Z_k),\nu_n<T\right].
\end{align*}
Conditioning on $Z_{\nu_n}$ and applying (\ref{f-ineq}), we obtain
$$
\left|\mathbf{E}_z\left[\sum_{k=\nu_n}^{T-1}f(Z_k),\nu_n<T\right]\right|
\leq \frac{C}{n^{\gamma(1-\varepsilon)}}\mathbf{E}_z[h(Z_{\nu_n}),\nu_n<\tau].
$$
{F}rom this inequality and Lemma \ref{lem-1}, we conclude
\begin{equation}\label{lem11.1}
\mathbf{E}_z[h(Z_{\nu_n}),\nu_n<T]=(1+o(1))\mathbf{E}_z[h(Z_{T}),\nu_n<T]\quad\text{as }n\to\infty.
\end{equation}
Since $h(Z_\tau)=0$, we have  $h(Z_{T})=h(Z_{n^{1-\varepsilon}}){\rm 1}\{\tau>n^{1-\varepsilon}\}$.  
Using Lemma \ref{lem-1} once again, we get
\begin{align*}
\mathbf{E}_z[h(Z_{T}),\nu_n<T]&=\mathbf{E}_z[h(Z_{n^{1-\varepsilon}}),\nu_n<n^{1-\varepsilon},\tau>n^{1-\varepsilon}]\\
&=\mathbf{E}_z[h(Z_{n^{1-\varepsilon}}),\tau>n^{1-\varepsilon}]+O(e^{-n^{\varepsilon/8}}).
\end{align*}
And in view of Lemma \ref{lem3}, 
$$
\lim_{n\to\infty}\mathbf{E}_z[h(Z_{T}),\nu_n<T]=V_0(z).
$$
Combining this relation with (\ref{lem11.1}), we get the desired result.
\end{proof}

\begin{lemma}
\label{lem21}
As $n\to\infty$,
$$
\mathbf{E}_z\left[h(Z_{\nu_n}),\tau>\nu_n,\nu_n\leq n^{1-\varepsilon},|S_{\nu_n}|>\theta_n\sqrt{n}\right]\to0.
$$
\end{lemma}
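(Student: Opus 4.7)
The plan is to apply Cauchy--Schwarz together with the moment hypothesis $\mathbf{E}|X|^{2+\delta}<\infty$. Set $A=\{\tau>\nu_n,\nu_n\le n^{1-\varepsilon},|S_{\nu_n}|>\theta_n\sqrt n\}$. By Lemma~\ref{lem:bounds.h}, $h(z)\le C\sqrt{\alpha(z)}$ for all $z$, so
\[
\mathbf{E}_z[h(Z_{\nu_n});A]\le C\bigl(\mathbf{E}_z[\alpha(Z_{\nu_n});\nu_n\le n^{1-\varepsilon}]\bigr)^{1/2}\mathbf{P}_z(A)^{1/2}.
\]

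For the probability factor, $\{\nu_n\le n^{1-\varepsilon}\}$ is itself rare: the requirement $S^{(2)}_{\nu_n}\ge n^{3/2-\varepsilon}$, together with the pointwise bound $|S^{(2)}_k|\le|y|+kM_k$ where $M_k:=\max_{j\le k}|S_j|$, forces $M_{n^{1-\varepsilon}}\ge cn^{1/2}$ for $n$ large (with $z$ fixed). Doob's maximal inequality combined with the Rosenthal--Marcinkiewicz--Zygmund bound (applicable thanks to $\mathbf{E}|X|^{2+\delta}<\infty$) yields $\mathbf{E}[M_N^{2+\delta}]\le CN^{1+\delta/2}$, and Chebyshev's inequality then gives
\[
\mathbf{P}_z(A)\le \mathbf{P}\bigl(M_{n^{1-\varepsilon}}\ge cn^{1/2}\bigr)\le Cn^{-\varepsilon(1+\delta/2)},
\]
independently of $\theta_n$.

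For the expectation factor, I would use $\alpha(Z_k)\le M_k+(|y|+kM_k)^{1/3}$ together with the first-moment bound $\mathbf{E}[M_{n^{1-\varepsilon}}]\le C\sqrt{n^{1-\varepsilon}}$ (Doob with $p=2$ plus Cauchy--Schwarz) to get
\[
\mathbf{E}_z[\alpha(Z_{\nu_n});\nu_n\le n^{1-\varepsilon}]\le C(1+|y|^{1/3})n^{(1-\varepsilon)/2}.
\]

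The hard part is that the product of these two estimates is only of order $n^{1/4-\varepsilon(3+\delta)/4}$, whose exponent is not automatically negative for small $\varepsilon$; one of the two factors has to be sharpened by incorporating the survival constraint $\tau>\nu_n$. Mirroring the proof of Lemma~\ref{lem11}, the plan is to use the lower bound (\ref{eq01}) together with the strong Markov property at $\nu_n$ (noting that $Z_{\nu_n}\in K_{n,\varepsilon}\subseteq K_{n-\nu_n,\varepsilon}$) to obtain $h(Z_{\nu_n})\le (1+Cn^{-\gamma})\mathbf{E}_{Z_{\nu_n}}[h(Z_{n-\nu_n});\tau>n-\nu_n]$, which transfers the problem to estimating $\mathbf{E}_z[h(Z_n);\tau>n,A]$; there the uniform boundedness of $\mathbf{E}_z[h(Z_n);\tau>n]$ from Lemma~\ref{lem3}, combined with the probability estimate $\mathbf{P}_z(A)\le Cn^{-\varepsilon(1+\delta/2)}$ above, closes the argument. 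Equivalently, once uniform integrability of the family $\{h(Z_{\nu_n})\ind_{\tau>\nu_n,\nu_n\le n^{1-\varepsilon}}\}$ is established---its expectations being uniformly bounded by Lemma~\ref{lem11}---the conclusion is immediate from $\mathbf{P}_z(A)\to 0$.
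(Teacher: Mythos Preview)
Your closing step does not work. Both versions of the ``fix'' rest on an invalid implication: from $\sup_n\mathbf E_z[h(Z_n);\tau>n]<\infty$ and $\mathbf P_z(A)\to0$ you \emph{cannot} conclude $\mathbf E_z[h(Z_n);\tau>n,A]\to0$, and boundedness of the sequence of expectations in Lemma~\ref{lem11} does not give uniform integrability of $h(Z_{\nu_n}){\rm 1}_{\{\tau>\nu_n,\nu_n\le n^{1-\varepsilon}\}}$. In fact, Lemma~\ref{lem11} tells you that the full expectation $\mathbf E_z[h(Z_{\nu_n});\tau>\nu_n,\nu_n\le n^{1-\varepsilon}]$ converges to $V_0(z)>0$, so the constraint $|S_{\nu_n}|>\theta_n\sqrt n$ has to be used in an essential way---your probability bound $\mathbf P_z(A)\le\mathbf P_z(\nu_n\le n^{1-\varepsilon})$ throws it away, and the transfer via (\ref{eq01}) does not recover it.

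The paper's argument is much more direct and avoids the detour through Lemma~\ref{lem3} entirely. One bounds $h(Z_{\nu_n})$ \emph{pointwise}: since $h\le C\sqrt{\alpha}$ and, on $\{\nu_n\le n^{1-\varepsilon}\}$, $\alpha(Z_{\nu_n})\le C\alpha(z)+C\max\{(n^{1-\varepsilon}M_{n^{1-\varepsilon}})^{1/3},M_{n^{1-\varepsilon}}\}$ with $M_k=\max_{j\le k}|S_j|$, the task reduces to a tail estimate for $M_{n^{1-\varepsilon}}$. The key observation is that $|S_{\nu_n}|>\theta_n\sqrt n$ with $\nu_n\le n^{1-\varepsilon}$ forces $M_{n^{1-\varepsilon}}>\theta_n\sqrt n$; choosing $\theta_n\to0$ slowly enough (so that $\theta_n\sqrt n\ge n^{(1-\varepsilon)/2}$) this is a large--deviation event for the maximum over $n^{1-\varepsilon}$ steps, and the max in the pointwise bound simplifies to $M_{n^{1-\varepsilon}}$. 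One is left with $C\alpha(z)\mathbf P(M_{n^{1-\varepsilon}}>\theta_n\sqrt n)+C\mathbf E[M_{n^{1-\varepsilon}};M_{n^{1-\varepsilon}}>\theta_n\sqrt n]$, and both terms are handled by a Fuk--Nagaev inequality. Your own observation that $S^{(2)}_{\nu_n}\ge n^{3/2-\varepsilon}$ already forces $M_{n^{1-\varepsilon}}\ge cn^{1/2}$ is correct and could be fed into this same direct scheme; the wasted effort is the Cauchy--Schwarz/transfer machinery, not that inclusion.
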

\begin{proof}
On the event $\nu_n\leq n^{1-\varepsilon}$,
$$
h(Z_{\nu_n})\leq C\alpha(z)+C\max\left\{\left(n^{1-\varepsilon}M_{n^{1-\varepsilon}}\right)^{1/3},M_{n^{1-\varepsilon}}\right\}
$$
and, consequently,
\begin{align}
\label{lem21.1}
\nonumber
&\mathbf{E}_z\left[h(Z_{\nu_n}),\tau>\nu_n,\nu_n\leq n^{1-\varepsilon},|S_{\nu_n}|>\theta_n\sqrt{n}\right]\\
&\hspace{1cm}\leq C\alpha(z)\mathbf{P}\left(M_{n^{1-\varepsilon}}>\theta_n\sqrt{n}\right)+
C\mathbf{E}\left[M_{n^{1-\varepsilon}},M_{n^{1-\varepsilon}}>\theta_n\sqrt{n}\right].
\end{align}
Here we used the fact that if $\theta_n\to0$ sufficiently slow, then
$$
\max\left\{\left(n^{1-\varepsilon}M_{n^{1-\varepsilon}}\right)^{1/3},M_{n^{1-\varepsilon}}\right\}=M_{n^{1-\varepsilon}}
$$
on the set $\left\{M_{n^{1-\varepsilon}}>\theta_n\sqrt{n}\right\}$.

Using now one of the Fuk-Nagaev inequalities, see Corollary 1.11 in \cite{Nag79}, one can easily conclude that both summands
on the right hand side of (\ref{lem21.1}) vanish as $n\to\infty$.
\end{proof}

\begin{lemma}
For any $z\in K_+$ we have 
\begin{equation}
\label{equality}
V_0(z)=V(z)
\end{equation}
\end{lemma}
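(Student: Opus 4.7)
The plan is to take $k \to \infty$ in identity (\ref{eq42}) from the proof of Lemma~\ref{lem0}, which reads
\[
\mathbf{E}_z[h(Z_k);\tau>k] = h(z) + \mathbf{E}_z\!\left[\sum_{l=0}^{\tau-1}f(Z_l);\tau\le k\right] + \mathbf{E}_z\!\left[\sum_{l=0}^{k-1}f(Z_l);\tau>k\right],
\]
or equivalently $\mathbf{E}_z[h(Z_k);\tau>k] - h(z) = \mathbf{E}_z\sum_{l=0}^{(\tau\wedge k)-1}f(Z_l)$. The left-hand side tends to $V_0(z)$ by Lemma~\ref{lem3}, and the asymptotics of Section~\ref{Sec.tau} (specifically Lemma~\ref{lem5}) yield $\mathbf{P}_z(\tau>k)\to 0$, so $\tau$ is $\mathbf{P}_z$-a.s.\ finite and $\tau \wedge k \to \tau$ pointwise.

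It then remains to show that $\lim_k \mathbf{E}_z\sum_{l=0}^{(\tau\wedge k)-1}f(Z_l) = \mathbf{E}_z\sum_{l=0}^{\tau-1}f(Z_l)$, which by dominated convergence reduces to the absolute-summability estimate
\[
\mathbf{E}_z\sum_{l=0}^{\tau-1}|f(Z_l)| < \infty.
\]
Granted this, the first summand on the right of (\ref{eq42}) converges to $\mathbf{E}_z\sum_{l=0}^{\tau-1}f(Z_l)$, while the second is bounded in absolute value by $\mathbf{E}_z[\sum_{l=0}^{\tau-1}|f(Z_l)|;\tau>k]$, which vanishes as $k\to\infty$. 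Passing to the limit then yields $V_0(z) = h(z) + \mathbf{E}_z\sum_{l=0}^{\tau-1}f(Z_l) = V(z)$.

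To prove the absolute summability, I would write $\mathbf{E}_z\sum_{l=0}^{\tau-1}|f(Z_l)| = \sum_{l=0}^\infty \mathbf{E}_z[|f(Z_l)|;\tau>l]$ and split each term by the magnitude of $\alpha(Z_l)$. On the typical event $\{\alpha(Z_l)\ge c\sqrt{l}\}$, Lemma~\ref{lem:bound.f} gives $|f(Z_l)|\le Cl^{-3/4-\delta/2}$ and Lemma~\ref{lem5} gives $\mathbf{P}_z(\tau>l)\le Ch(z)l^{-1/4}$, producing a summable contribution of order $l^{-1-\delta/2}$. The atypical event $\{\alpha(Z_l) < c\sqrt{l},\tau>l\}$ is handled by a dyadic decomposition of $\alpha(Z_l)$ combined with the concentration bounds of Lemma~\ref{concentration}, in the spirit of the $\Sigma_2$ and $\Sigma_3$ estimates from the proof of Lemma~\ref{lem0}; the crucial point is that the killing factor $\mathbf{1}_{\tau>l}$ supplies the extra $l^{-1/4}$ decay that converts the unconditional $n^{1/4-\delta/2}$ bound there into a truly convergent series.

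The main obstacle is the bookkeeping in this atypical regime. A naive Cauchy--Schwarz split $\mathbf{E}[|f(Z_l)|^2]^{1/2}\mathbf{P}_z(\tau>l)^{1/2}$ produces only $l^{-7/8-\delta/2}$, which fails to be summable for $\delta\in(0,1/4)$, so one must genuinely exploit the joint rarity of "walk close to the origin" and "walk has not yet exited". Either direct concentration-plus-decay estimates (building on the techniques in the proof of Lemma~\ref{lem0}) or a local-limit-type refinement as in (\ref{Loc.2}) — which prescribes the conditional distribution of $Z_l$ given $\tau>l$ — suffices to close the argument.
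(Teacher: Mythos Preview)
Your overall strategy is exactly the paper's: pass to the limit in (\ref{eq42}), reduce everything to the absolute summability
\[
\sum_{l\ge 0}\mathbf{E}_z\bigl[|f(Z_l)|;\tau>l\bigr]<\infty,
\]
and feed in the already-established tail behaviour of $\tau$. One minor correction: the bound $\mathbf{P}_z(\tau>l)\le C\,l^{-1/4}$ for a \emph{fixed} starting point $z$ is not Lemma~\ref{lem5} (that lemma requires $z\in K_{n,\varepsilon}$); it is the full asymptotic $\mathbf{P}_z(\tau>n)\sim\varkappa V_0(z)n^{-1/4}$, which by this stage of Section~\ref{Sec.tau} has been established via (\ref{T1.10})--(\ref{T1.30}) together with Lemmas~\ref{lem11} and~\ref{lem21}. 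The paper explicitly invokes this.

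The one concrete idea you are missing --- the clean resolution of the ``main obstacle'' you yourself flag --- is to apply the Markov property at the midpoint $l/2$:
\[
\mathbf{E}_z\bigl[|f(Z_l)|;\tau>l\bigr]
\le \mathbf{P}_z(\tau>l/2)\,\sup_{z'}\mathbf{E}_{z'}\bigl[|f(Z_{l/2})|;\,\cdots\bigr].
\]
The first factor supplies the extra $l^{-1/4}$, while the supremum over $z'$ decouples the concentration estimate from the killing event entirely: the second factor is then bounded \emph{exactly} as the per-step contributions to $\Sigma_2$ and $\Sigma_3$ in the proof of Lemma~\ref{lem0}, giving $O(l^{-3/4-\delta/2})$. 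The product is $O(l^{-1-\delta/2})$, which is summable. This simple decoupling replaces both your failed Cauchy--Schwarz attempt and the heavier local-limit route you mention; no additional machinery is needed.
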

\begin{proof}
The aim of this lemma is to show that 2 definitions of $V$ coincide. 
The proof follows closely the proof of Lemma~\ref{lem0}. 
Recall equation (\ref{eq42}), 
\begin{align*}
    {\mathbf E}_{z} [h(Z_n);\tau>n]
=h(z)+\mathbf E_{z} \left[\sum_{l=0}^{\tau-1}
      f(Z_l);\tau\le n\right] +\sum_{l=0}^{n-1}\mathbf
    E_{z}[f(Z_l);\tau>n]. 
\end{align*}
It is sufficient to prove that
  \begin{equation}
    \label{eq:sum.finite.22}
    \mathbf E_z \left[\sum_{l=1}^{\tau-1} |f(Z(l))|\right]<\infty.
  \end{equation}
  Indeed, the dominated convergence theorem then implies that
$$
\mathbf E_z \left[\sum_{l=0}^{\tau-1} f(Z(l));\tau_x\le n\right] \to
\mathbf E_z \left[\sum_{l=0}^{\tau-1} f(Z(l))\right]
$$
and
$$
\left|\sum_{l=0}^{n-1}\mathbf E[f(Z(l));\tau>n]\right| \le \mathbf
E_z \left[\sum_{l=0}^{\tau-1} |f(Z(l))|;\tau>n\right]\to 0
$$
since $\tau_x$ is finite a.s. Then,
$$
\mathbf E_z[h(Z_n);\tau>n]\to h(z)+\mathbf E_{z}\sum_{l=0}^{\tau-1}
      f(Z_l)=V(z), 
$$
which proves (\ref{equality}). 

To prove (\ref{eq:sum.finite.22}) we use the fact that we have already proved that 
$$
\mathbf P_z(\tau>n)\sim V_0(z)n^{-1/4}.
$$
We split (\ref{eq:sum.finite.22}) in three parts,
\begin{align*}
  \mathbf E_{z}\sum_{l=0}^{\tau-1}|f(Z_l)|
  &=f(z)+\sum_{l=1}^{\infty}\mathbf E_{z}[|f(Z_l)|;\tau>l]
 \\
  &= f(z)+\sum_{l=1}^{\infty}\mathbf E_{z} \left[ |f(Z_l)|;|S_l^{(2)}|,|S_l|\le 1,\tau>l\right]\\
  &\hspace{1cm}+\sum_{l=1}^{\infty}\mathbf E_{z}  \left[ |f(Z_l)|;|S_l^{(2)}|^{1/3}>|S_l|,\tau>l\right]\\
  &\hspace{1cm}+ \sum_{l=1}^{\infty}\mathbf E_{z} \left[ |f(Z_l)|;|S_l^{(2)}|^{1/3}\le |S_l|,\tau>l\right]\\
  &=:f(z)+\Sigma_1+\Sigma_2+\Sigma_3.
\end{align*}
First, using the fact that $|f(x,y)|\le C$ for $|x|,|y|\le 1$ and Lemma \ref{concentration}, we obtain
\begin{align*}
  \Sigma_1&\le C\sum_{l=1}^\infty \mathbf P_z(|S_l^{(2)}|,|S_l|\le 1)
  \le C \sum_{l=1}^\infty l^{-2}<C.
  \end{align*}
Second, by Lemma~\ref{lem:bound.f},
\begin{align*}
  \Sigma_2&\le  C\sum_{l=1}^{\infty}\mathbf E_{z} \left[|S_l^{(2)}|^{-1/2-\delta/3},\tau>l\right]\\
  & \le C\sum_{l=1}^{\infty}\mathbf P_z(\tau>l/2)\sup_z\mathbf E_{z} \left[|S_{l/2}^{(2)}|^{-1/2-\delta/3}\right]\\
  &\le CV_0(z) \sum_{l=1}^{\infty}l^{-1/4}\sum_{j=1}^{\infty} \sup_z\mathbf E_{z}\left[|S_{l/2}^{(2)}|^{-1/2-\delta/3};j\le |S_{l/2}^{(2)}|\le j+1\right] \\
  &\le CV_0(z)\sum_{l=1}^{\infty}l^{-1/4}\left(\sum_{j=1}^{l^{3/2}} j^{-1/2-\delta/3}\mathbf
  P_z(j\le |S_{l/2}^{(2)}|\le j+1)+l^{3/2(-1/2-\delta/3)}\mathbf P_{z}(|S_{l/2}^{(2)}|>l^{3/2})\right).
\end{align*}
Now we use the second concentration inequality from Lemma \ref{concentration} to get
an estimate
\begin{align*}
  \mathbf P_{z}(j\le |S_l^{(2)}|\le j+1)\le Cl^{-3/2}.
\end{align*}
Then,
\begin{align*}
  \Sigma_2&\le CV_0(z) \sum_{l=1}^{\infty} l^{-1/4}\left(l^{-3/2}\sum_{j=1}^{l^{3/2}} j^{-1/2-\delta/3}+l^{-3/4-\delta/2}\right)\\
  &\le C V_0(z)\sum_{l=1}^{\infty} l^{-1-\delta/2}\le CV_0(z).
\end{align*}
Similarly,
\begin{align*}
  \Sigma_3&\le  C\sum_{l=1}^{\infty} 
\mathbf P_z(\tau>l/2)
\sup_z\mathbf E_{z}\left[|S_{l/2}|^{-3/2-\delta};|Y(l/2)|\ge 1;|S_{l/2}^{(2)}|^{1/3}\leq|S_{l/2}|\right]\\
  &\le CV_0(z) \sum_{l=1}^{\infty}l^{-1/4}\sum_{j=1}^{\infty} \mathbf E_{z}\left[|S_{l/2}|^{-3/2-\delta};j\le |S_{l/2}|\le j+1;|S_{l/2}^{(2)}|\le (j+1)^3\right] \\
  &\le CV_0(z) \sum_{l=1}^{\infty}l^{-1/4}\biggl(\sum_{j=1}^{l^{1/2}} j^{-3/2-\delta}\mathbf
  P_z(j\le |S_{l/2}|\le j+1;|S_{l/2}^{(2)}|\le (j+1)^3)\\
  &\hspace{2cm}  +l^{-3/4-\delta/2}\mathbf P_{z}(|S_{l/2}|>l^{1/2})\biggr).
\end{align*}
Using Lemma \ref{concentration} once again, we get
an estimate
\begin{align*}
  \mathbf P_{z}(j\le |S_{l/2}|\le j+1;|S_{l/2}^{(2)}|\le (j+1)^3)&\le 
C\sum_{i=1}^{(j+1)^3}\mathbf P_{z}(j\le |S_{l/2}|\le j+1;|S_{l/2}^{(2)}|\in (i,i+1))\\
&\le C(l+1)^{-2}j^3.
\end{align*}
Then, 
\begin{align*}
  \Sigma_3&\le CV_0(z) \sum_{l=1}^{\infty}l^{-1/4}
\left(\sum_{j=1}^{l^{1/2}} j^{-3/2-\delta}l^{-2}j^3+l^{-3/4-\delta/2}\right)\\
  &\le CV_0(z) \sum_{l=1}^{\infty}l^{-1/4} \left(l^{-2}l^{5/4-\delta/2}+l^{-3/4-\delta/2}\right)\\
&\le CV_0(z) \sum_{l=1}^{\infty}l^{-1-\delta/2 }\le CV_0(z).
\end{align*}
This proves that the sum (\ref{eq:sum.finite.22}) is finite. 

\end{proof}

\end{document}